\newcommand{\vertex}[2]{%
  \fill (#1,0) circle (1.2pt);
  \node[below] at (#1,-0.05) {\small #2};
}
\newcommand{\matchfour}[2]{%
\begin{tikzpicture}[baseline, scale=0.9]
  \foreach \i in {1,2,3,4}
    \vertex{\i}{\i}
  \foreach \a/\b in {#1}
    \draw (\a,0) to[bend left=50] (\b,0);
  \node at (2.5,-0.9) {\small #2};
\end{tikzpicture}
}
\tikzset{
	level 1/.style = {sibling distance = 1.5cm},
	level 2/.style = {sibling distance = 0.8cm},
    level distance = 0.9 cm
}
\tikzstyle{snakeline} = [decorate, decoration={snake, amplitude=.4mm, segment length=2mm}]
\tikzset{every tree node/.style={minimum width=0.1cm,draw,circle},
         blank/.style={draw=none},
         edge from parent/.style=
         {draw,edge from parent path={(\tikzparentnode) -- (\tikzchildnode)}},
         level distance=0.8cm}
\newtheorem{theorem}{Theorem}
\newtheorem{corollary}[theorem]{Corollary}
\newtheorem{proposition}[theorem]{Proposition}
\newtheorem{lemma}[theorem]{Lemma}
\newtheorem{definition}[theorem]{Definition}
\newtheorem{example}[theorem]{Example}
\newcommand{\md}{\mathcal{D}}
\newcommand{\mm}{\mathcal{M}}
\newcommand{\cda}{{\rm cda\,}}
\newcommand{\drop}{{\rm drop\,}}
\newcommand{\plat}{{\rm plat\,}}
\newcommand{\des}{{\rm des\,}}
\newcommand{\exc}{{\rm exc\,}}
\newcommand{\we}{{\rm wexc\,}}
\newcommand{\cyc}{{\rm cyc\,}}
\newcommand{\fix}{{\rm fix\,}}
\newcommand{\msn}{\mathfrak{S}_n}
\newcommand{\inv}{{\rm inv\,}}
\newcommand{\lrf}[1]{\lfloor #1\rfloor}
\newcommand{\mbn}{{\mathcal B}_n}
\newcommand{\mq}{\mathcal{Q}}
\newcommand{\mqn}{\mathcal{Q}_n}
\newcommand{\asc}{{\rm asc\,}}
\newcommand{\Eulerian}[2]{\genfrac{<}{>}{0pt}{}{#1}{#2}}
\newcommand{\Stirling}[2]{\genfrac{\{}{\}}{0pt}{}{#1}{#2}}
\newcommand{\stirling}[2]{\genfrac{[}{]}{0pt}{}{#1}{#2}}
\newcommand{\arxiv}[1]{\href{http://arxiv.org/abs/#1}{\texttt{arXiv:#1}}}
\title{Eulerian-type polynomials over matchings and matching permutations}
\author[S.-M.~Ma]{Shi-Mei Ma}
\address{School of Mathematics and Statistics, Shandong University of Technology, Zibo 255000, Shandong, P.R. China}
\email{shimeimapapers@163.com (S.-M. Ma)}
\author[Sergey Kitaev]{Sergey Kitaev}
\address{Department of Mathematics and Statistics, University of Strathclyde, 26 Richmond Street, Glasgow G1 1XH, United Kingdom}
\email{sergey.kitaev@strath.ac.uk (S. Kitaev)}
\author[J. Yeh]{Jean Yeh}
\address{Department of Mathematics, National Kaohsiung Normal University, Kaohsiung 82444, Taiwan}
\email{chunchenyeh@nknu.edu.tw (J. Yeh)}
\author[Y.-N. Yeh]{Yeong-Nan Yeh}
\address{College of Mathematics and Physics, Wenzhou University, Wenzhou 325035, P.R. China}
\email{mayeh@alum.sinica.edu.tw (Y.-N. Yeh)}
\subjclass[2010]{Primary 05A19; Secondary 05E05}
\begin{document}

\maketitle
\begin{abstract}
Claesson and Linusson [Proc. Am. Math. Soc., 139 (2011), 435--449] observed
that there are $n!$ matchings on $[2n]$ with no left-nestings. 
Inspired by this result, this paper is devoted to exploring a deeper connection between matchings and permutations.
We first discover that a quadruple statistic over 
matchings corresponds to the well known quadruple statistic $\left(\exc,\drop,\fix,\cyc\right)$ 
over permutations, where $\exc,\drop,\fix$ and $\cyc$ are the excedance, drop, fixed point and cycle statistics, respectively. 
By introducing matching permutations,
we provide a symmetric expansion of a five-variable neighbor polynomial of matchings, which encodes a great deal of neighbor information.
As an application, we discover the $e$-positivity of NCA-polynomials, which implies that the left-nesting number, the left-crossing number and the neighbor alignment number are distributed symmetrically over all matchings on $[2n]$. 
We also establish the relationship between the five-variable neighbor polynomials and the trivariate second-order Eulerian polynomials, which generalizes the related results of Claesson and Linusson, Cameron and Killpatrick as well as Chen and Fu.

\bigskip

\noindent{\sl Keywords}: Matchings; Matching permutations; Eulerian polynomials; $e$-Positivity
\end{abstract}
\date{\today}
\tableofcontents
\section{Introduction}

The matchings can be interpreted as fixed-point-free involutions~\cite{Sokal22} and linear chord diagrams~\cite{Alexeev16,Cameron19,Zagier01}.
They have been extensively studied in combinatorics~\cite{Chen07,Corteel07} 
and have interesting applications in algebra~\cite{Chen07,Liu25}, geometry~\cite{BousquetKitaev10,Williams05} and topology~\cite{Stoimenow98,Zagier01}. 
To mention a few, see the adjoint representation of the simple Lie algebra~\cite{Campoamor04}, the linearly independent Vassiliev invariants of knot theory~\cite{Zagier01} and 
the interacting RNA molecules~\cite{Alexeev16}. In this paper,
we shall propose a framework for understanding the relationship between matchings and permutations.

Let $[n]:=\{1,2,\ldots,n\}$.
A {\it matching} on $[2n]$ is a partition of $[2n]$ into $n$ blocks, where each block contains exactly two elements.
Let $\mm_n$ denote the set of matchings on $[2n]$. 
The {\it standard form} of a matching $M\in\mm_n$ is a list of blocks $(i_1,j_1)(i_2,j_2)\cdots(i_n,j_n)$ such that $i_r<j_r$ for all $1\leqslant r\leqslant n$ and $j_1<j_2<\cdots<j_n=2n$. Pictorially, a matching can be represented by an arc diagram where the elements in $[2n]$ are drawn as vertices on a 
horizontal line increasingly labeled and elements of the same block are joined by an arc. 
If $\alpha=(i,j)$ is an arc (or a block) of $M$, we call $i$ the {\it opener} of $\alpha$ and $j$ the {\it closer} of it.
As usual, we always order the arcs with respect to closers, and so the last arc of $M$ is the arc with the closer $2n$.
For example, the three matchings on $[4]$ can be represented by Figure~\ref{fig1}.
\begin{figure}[!ht]\label{fig3}
\renewcommand{\arraystretch}{2}
\begin{center}
\begin{tabular}{c|c|c}
\matchfour{1/2,3/4}{$(1,2)(3,4)$} &
\matchfour{1/3,2/4}{$(1,3)(2,4)$} &
\matchfour{1/4,2/3}{$(2,3)(1,4)$}
\end{tabular}.
\end{center}
\caption{Blocks in each matching are ordered in increasing order of their second elements.}\label{fig1}
\end{figure}

A {\it crossing} (resp. {\it nesting}) of a matching $M\in\mm_n$ is formed by two arcs $(i_1,j_1)$ and $(i_2,j_2)$ such that $i_1<i_2<j_1<j_2$ (resp. $i_1<i_2<j_2<j_1$). 
Following Stoimenow~\cite{Stoimenow98}, if we further require that $i_1+1=i_2$, then such a crossing (resp.~nesting) is called {\it left-crossing} (resp.~{\it left-nesting}). 
Similarly, one can define right-crossing and right-nesting. We say that two arcs $(i_1,j_1)$ and $(i_2,j_2)$ form an {\it alignment} if $i_1<j_1<i_2<j_2$. If we 
further require that $j_1+1=i_2$, then such an alignment is called a {\it neighbor alignment}, which was introduced in~\cite{Chen12}. 

The {\it Catalan number} $C_n=\frac{1}{n+1}\binom{2n}{n}$ counts 
matchings in $\mm_n$ with no crossings (also known as noncrossing matchings), 
see~\cite[Exercise~6.19]{Stanley99}. The {\it Narayana number} $N(n,k)=\frac{1}{n}\binom{n}{k-1}\binom{n}{k}$ counts noncrossing matchings in $\mm_n$ with $k$ blocks of the form $(i,i+1)$, see~\cite[p.~3]{Cameron19} for instance. It is well known that
Eulerian numbers possess many of the same or similar properties as Narayana numbers, see~\cite{Corteel07,Ma19,Petersen15} for instance.
It is natural to investigate the relationship between Eulerian numbers and matchings. 

Let $\operatorname{cr}(M)$ (resp.~$\operatorname{ne}(M)$, $\operatorname{al}(M)$) denote the number of crossings (resp.~nestings, alignments) in $M$.
By adapting the Touchard-Riordan method that encodes matchings by weighted Dyck paths, 
Klazar~\cite{Klazar06} found that
\begin{equation*}\label{Klazar}
\#\{M\in\mm_n: \operatorname{cr}(M)=s, \operatorname{ne}(M)=t\}=\#\{M\in\mm_n: \operatorname{cr}(M)=t, \operatorname{ne}(M)=s\}.
\end{equation*}
In~\cite{Zeng02}, Kasraoui and Zeng proved that 
\begin{equation*}\label{Zeng}
\sum_{M\in\mm_n}x^{\operatorname{cr}(M)}y^{\operatorname{ne}(M)}q^{\operatorname{al}(M)}=
\sum_{M\in\mm_n}x^{\operatorname{ne}(M)}y^{\operatorname{cr}(M)}q^{\operatorname{al}(M)}.
\end{equation*}
Since then, much attention has been devoted to enumerating problems involving crossings and nestings for a large class of combinatorial objects, 
including partial matchings~\cite{Chen12,Levande13}, set partitions~\cite{Chen07,Zeng02,Rubey10,Sokal22} and permutations~\cite{Corteel07,Yen15}. 
Here we list two of them as follows:
 \begin{itemize}
  \item [\rm $(i)$] In order to enumerate totally positive Grassmann cells, 
  Williams~\cite{Williams05} introduced the notation of alignments of a permutation 
  and found a common $q$-analog of the Eulerian numbers, Narayana numbers and binomial coefficients;
     \item [\rm $(ii)$] In an influential work, Corteel~\cite{Corteel07} introduced the notion of crossings and nestings of a permutation, and computed 
the generating function of permutations of $[n]$ with a fixed number of weak excedances, crossings and nestings, 
where a {\it weak excedance} of a permutation is a fixed point or an excedance.
 \end{itemize}

Let $\msn$ be the set of all permutations of $[n]$.
For $\pi\in\msn$, we say that an index $i$ is an {\it excedance} (resp.~{\it drop}, {\it fixed point},~{\it ascent},~{\it descent})
if $\pi(i)>i$ (resp.~$\pi(i)<i$,~$\pi(i)=i$,~$\pi(i)<\pi(i+1)$,~$\pi(i)>\pi(i+1)$).
Let $\exc(\pi)$ (resp.~$\drop(\pi)$, $\fix(\pi)$, $\asc(\pi)$, $\des(\pi)$ and $\cyc(\pi)$) be 
the number of excedances (resp.~drops, fixed points, ascents, descents and cycles) of $\pi$.
It is well known that excedances, drops, ascents and descents are equidistributed over $\msn$. The {\it bivariate Eulerian polynomials} are defined by
\begin{equation}\label{Anxy}
A_n(x,y):=\sum_{\pi\in\msn}x^{\exc(\pi)}y^{\drop(\pi)}=\sum_{\pi\in\msn}x^{\asc(\pi)}y^{\des(\pi)},
\end{equation}
where the last identity can be proved by using {\it Foata's fundamental transformation} (see~\cite{Petersen15}).

An {\it inversion} in $\pi=\pi(1)\pi(2)\cdots\pi(n)\in\msn$ is a pair of entries $\pi(i)$ and $\pi(j)$, where $i<j$ and $\pi(i)>\pi(j)$.
A sequence $\textbf{e}=(e_1,e_2,\ldots,e_n)$ is an {\it inversion sequence} if $0\leqslant e_i<i$. Let $\operatorname{I}_n$ be the set of inversion sequences of length $n$. 
A well known bijection between $\operatorname{I}_n$ and $\msn$ is
defined by $\psi(\pi)=\textbf{e}$, where $e_i=\#\{j\mid 1\leqslant j<i~{\text{and}}~\pi(j)>\pi(i)\}$.
In~\cite[Theorem 2.1]{Claesson11}, Claesson and Linusson found that there is a bijection between $\operatorname{I}_n$ and 
matchings in $\mm_n$ with no left-nestings, see~\cite{Levande13} for further discussion. 
Inspired by Claesson and Linusson's work~\cite{Claesson11}, the paper is devoted to exploring a deeper connection between matchings and permutations.

The structure of the paper is as follows.
In Section~\ref{section2}, we discover that a quadruple statistic on matchings corresponds to the 
quadruple statistic $\left(\exc,\drop,\fix,\cyc\right)$ 
on permutations, i.e., 
$$\sum_{M\in\mm_n}x^{\operatorname{elblock}(M)}y^{\operatorname{olblock}(M)}s^{\operatorname{fixb}(M)}t^{\operatorname{trace}(M)}
=\sum_{\pi\in\msn}(2x)^{\exc(\pi)}(2y)^{\drop(\pi)}(2s)^{\fix(\pi)}\left(\frac{t}{2}\right)^{\cyc(\pi)}.$$
In Section~\ref{section03}, we provide a symmetric expansion of a five-variable neighbor polynomial of matchings.
We also establish the relationship between the neighbor polynomials and the trivariate second-order Eulerian polynomials. 
In particular, by Corollary~\ref{Anxy-Mn},
we get the following result.
\begin{theorem}
Let $\operatorname{lne}$ (resp.~$\operatorname{lcr}$,~$\operatorname{nal}$,~$\operatorname{lrp}$) be the left-nesting (resp.~left-crossing,~neighbor alignment, LR pair) statistic, where a LR pair is a pair of consecutive integer $(i,i+1)$ in the arc diagram of $M\in\mm_n$ such that $i$ is an opener and $i+1$ is a closer ($i$ and $i+1$ possibly in different arcs). The bivariate Eulerian polynomials defined by~\eqref{Anxy} can be interpreted as follows:
\begin{equation*}
A_n(x,y)=\sum_{\substack{M\in \mm_n\\\operatorname{nal}(M)=0}}{x}^{\operatorname{lne}(M)}
{y}^{\operatorname{lcr}(M)}=\sum_{\substack{M\in \mm_n\\\operatorname{lcr}(M)=0}}{x}^{\operatorname{lne}(M)}
{y}^{\operatorname{nal}(M)}=\sum_{\substack{M\in \mm_n\\\operatorname{lne}(M)=0}}{x}^{\operatorname{lcr}(M)}
{y}^{\operatorname{nal}(M)},
\end{equation*}
\begin{equation*}
A_n(x,y)=\sum_{\substack{M\in \mm_n\\\operatorname{lne}(M)=0}}{x}^{\operatorname{lcr}(M)}
{y}^{\operatorname{lrp}(M)-1}=\sum_{\substack{M\in \mm_n\\\operatorname{lcr}(M)=0}}{x}^{\operatorname{lne}(M)}
{y}^{\operatorname{lrp}(M)-1}=\sum_{\substack{M\in \mm_n\\\operatorname{lrp}(M)=1}}{x}^{\operatorname{lne}(M)}
{y}^{\operatorname{lcr}(M)}.
\end{equation*}
\end{theorem}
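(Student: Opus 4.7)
My plan is to deduce all six identities from Corollary~\ref{Anxy-Mn} together with one elementary conservation law. The first three identities come essentially for free from Section~\ref{section03}: the symmetric expansion of the five-variable neighbor polynomial implies that the triple $(\operatorname{lne}, \operatorname{lcr}, \operatorname{nal})$ has a jointly symmetric distribution over $\mm_n$, so for any permutation of these three statistics the bivariate generating function of two of them over the set where the third vanishes is invariant. Corollary~\ref{Anxy-Mn} identifies this common polynomial as $A_n(x,y)$, which gives the three identities of the first displayed line simultaneously.

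For the second displayed line, the key auxiliary fact is the conservation identity
\begin{equation*}
\operatorname{lne}(M) + \operatorname{lcr}(M) + \operatorname{lrp}(M) = n \qquad \text{for every } M \in \mm_n.
\end{equation*}
I would prove this by counting openers in $\{1, \ldots, 2n-1\}$ in two ways. Since the standard form of $M$ forces $j_n = 2n$ to be a closer, the number of openers in $\{1, \ldots, 2n-1\}$ equals $n$. On the other hand, for each such opener $i$ the consecutive pair $(i, i+1)$ is either opener-opener (contributing to $\operatorname{lne} + \operatorname{lcr}$) or opener-closer (contributing to $\operatorname{lrp}$, whether $i$ and $i+1$ lie in the same arc or not), and these two possibilities exhaust and partition the opener positions in $\{1, \ldots, 2n-1\}$.

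Using this identity on $\{M : \operatorname{lne}(M) = 0\}$, we have $\operatorname{lrp}(M) - 1 = n - 1 - \operatorname{lcr}(M)$, and so
\begin{equation*}
\sum_{\operatorname{lne}(M)=0} x^{\operatorname{lcr}(M)}\, y^{\operatorname{lrp}(M) - 1} \;=\; \sum_{\operatorname{lne}(M)=0} x^{\operatorname{lcr}(M)}\, y^{n - 1 - \operatorname{lcr}(M)}.
\end{equation*}
Setting $y = 1$ in the third identity of the first line and invoking the symmetry of $(\operatorname{lne}, \operatorname{lcr}, \operatorname{nal})$ yields $\sum_{\operatorname{lne}(M)=0} x^{\operatorname{lcr}(M)} = A_n(x)$, so the right-hand side above is precisely the homogeneous form of $A_n(x, y)$. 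The cases $\operatorname{lcr} = 0$ (using $\operatorname{lrp} - 1 = n - 1 - \operatorname{lne}$) and $\operatorname{lrp} = 1$ (using $\operatorname{lcr} = n - 1 - \operatorname{lne}$) are handled identically.

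The only technical obstacle is confined to Corollary~\ref{Anxy-Mn} and its underlying symmetric expansion of the five-variable neighbor polynomial; the genuine combinatorial content of the present theorem is absorbed there, and the remaining derivation is a clean extraction via homogenization and the elementary conservation identity above.
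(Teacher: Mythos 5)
Your derivation of the first displayed line and of the first two identities in the second line is sound, and your route to the latter is genuinely different from the paper's. The paper obtains all six identities at once from the symmetric $e$-positive expansion of the five-variable neighbor polynomial: Theorem~\ref{mainthm2} gives $C_n(x,y,0,1,1)=C_n(x,0,y,1,1)=C_n(0,x,y,1,1)=A_n(x,y)$ via the $0$-$1$-$2$ increasing plane tree interpretation of the $\gamma$-coefficients, and Corollary~\ref{cor19} (the equidistribution $\operatorname{NCA}_n=\operatorname{NCR}_n$, i.e., of $(\operatorname{lne},\operatorname{lcr},\operatorname{nal})$ with $(\operatorname{lne},\operatorname{lcr},\operatorname{lrp}-1)$) then converts the $\operatorname{nal}$-identities into the $\operatorname{lrp}$-identities. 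Your conservation law $\operatorname{lne}(M)+\operatorname{lcr}(M)+\operatorname{lrp}(M)=n$ is correct: the sets $\operatorname{Lne}$, $\operatorname{Lcr}$, $\operatorname{Lrp}$ partition the $n$ opener positions, all of which lie in $[2n-1]$ because $2n$ is always a closer, and an opener--opener pair is exactly a left-nesting or a left-crossing according to the relative order of the two closers. Combined with the homogeneity of $A_n(x,y)$ of degree $n-1$, this does yield the identities over $\{\operatorname{lne}=0\}$ and $\{\operatorname{lcr}=0\}$ without ever mentioning $\operatorname{NCR}_n$, which is a pleasant shortcut.

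The gap is in the third identity of the second line. Your template is: restrict to a set where one statistic is extremal, use the conservation law to eliminate the third statistic, and anchor the computation with a univariate identity $\sum x^{\operatorname{stat}}=A_n(x)$ extracted from the first displayed line. For $\{M:\operatorname{lrp}(M)=1\}$ the required anchor is $\sum_{\operatorname{lrp}(M)=1}x^{\operatorname{lne}(M)}=A_n(x)$, and this is not among the specializations of the first line: those concern only the sets $\{\operatorname{lne}=0\}$, $\{\operatorname{lcr}=0\}$, $\{\operatorname{nal}=0\}$, while the conservation law identifies $\{\operatorname{lrp}=1\}$ with $\{\operatorname{lne}+\operatorname{lcr}=n-1\}$, which is none of them; knowing the joint distributions on the three vanishing sets does not determine the distribution of $\operatorname{lne}$ on this top stratum. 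So ``handled identically'' does not go through. The repair is short but requires returning to the five-variable polynomial: either invoke $\operatorname{NCR}_n(x,y,0)=\operatorname{NCA}_n(x,y,0)$ from Corollary~\ref{cor19}, or observe that $\sum_{\sigma}x^{\operatorname{lne}(\sigma)}y^{\operatorname{lcr}(\sigma)}z^{\operatorname{lrp}(\sigma)-1}$ is itself symmetric in $x,y,z$ (being equal to $\operatorname{NCA}_n(x,y,z)$), so the $\{\operatorname{lrp}=1\}$ sum coincides with the $\{\operatorname{lne}=0\}$ sum you have already computed.
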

\section{$(p,q)$-Eulerian polynomials and $(s,t)$-even-odd larger matching polynomials}\label{section2}
\subsection{Definitions and main results}
\hspace*{\parindent}

When $y=1$, the polynomial $A_n(x,y)$ reduces to the {\it Eulerian polynomial} $A_n(x)$, which first appeared in a summation formula due to Euler:
\begin{equation*}\label{Anx-poly-def}
\sum_{k=0}^\infty (k+1)^nx^k=\frac{A_n(x)}{(1-x)^{n+1}}.
\end{equation*}
Let $A_n(x)=\sum_{k=0}^n\Eulerian{n}{k}x^k$, where $\Eulerian{n}{k}$ is called {\it Eulerian number}.
In the past decades, various generalizations of Eulerian polynomials and Eulerian numbers have been pursued by several authors, see~\cite{Branden06,Branden25,Gessel20,Hwang20,Jin23,Savage15} for instance.

Consider the {\it $(p,q)$-Eulerian polynomials}
\begin{equation*}\label{Anxpq-def}
A_n(x,p,q)=\sum_{\pi\in\msn}x^{\exc(\pi)}p^{\fix(\pi)}q^{\cyc(\pi)}.
\end{equation*}
A well known special case is given as follows:
\begin{equation}\label{An11q}
A_n(1,1,q)=\sum_{\pi\in\msn}q^{\cyc(\pi)}=q(q+1)(q+2)\cdots(q+n-1).
\end{equation}
As observed in~\cite{Branden22,Zeng022,Ma24,Ma25,Zeng12,Sokal22}, the polynomials $A_n(x,p,q)$ contain 
a great deal of information about permutations and signed permutations. For example, the {\it $r$-colored Eulerian polynomials} introduced by Steingr\'imsson~\cite{Steingrimsson}
can be expressed as follows (see~\cite[p.~30]{Ma24}):
\begin{equation}\label{Anrx}
A_{n,r}(x)=r^nA_n\left(x,\frac{1+(r-1)x}{r},1\right).
\end{equation}
When $r=1$ and $r=2$, the polynomial $A_{n,r}(x)$ reduces to the types $A$ and $B$ Eulerian polynomials $A_n(x)$ and $B_n(x)$.
Using the {\it exponential formula}, Ksavrelof and Zeng~\cite{Zeng022} found that
\begin{equation}\label{Anxpq-EGF}
\sum_{n=0}^\infty A_n(x,p,q)\frac{z^n}{n!}=\left(\frac{(1-x)\mathrm{e}^{pz}}{\mathrm{e}^{xz}-x\mathrm{e}^{z}}\right)^q.
\end{equation}

An {\it{even-to-odd} block} is a block $(i,j)$ such that $i$ is even and $j$ is odd, where $i<j$. 
Let $a_{n}$ be the number of matchings in $\mm_n$ with no even-to-odd blocks.
Callan~\cite[Proposition~7]{Callan10} found that the exponential generating function of $a_n$ is given as follows:
$$\sum_{n=0}^\infty a_n\frac{z^n}{n!}=\sqrt{\frac{\mathrm{e}^z}{2-\mathrm{e}^z}}.$$
A few years later, Ren~\cite{Ren15} investigated seven sets with the cardinality $a_n$, 
including the set of signed permutations with decreasing blocks all of whose left-to-right-minima have positive signs.
Recently, Sokal and Zeng~\cite[Section~4]{Sokal22} obtained $S$-fractions for perfect matchings by involving right-to-left minima and left-to-right maxima.
They also observed some connections between matchings and permutations, see~\cite[Eq.~(4.5)]{Sokal22} for instance.
As a continuation of these work, we introduce further definitions.
For $M\in\mm_n$, a block $\alpha=(i,j)$ is called 
\begin{itemize}
  \item a {\it fixed block} if $i=2k-1$ and $j=2k$, where $1\leqslant k\leqslant n$;
  \item an {\it odd larger block} if $j$ is odd;
 \item  an {\it odd smaller block} if $i=2k-1$ and $i+1<j$;
 \item an {\it even larger block} if $j=2k$ and $j>i+1$;
 \item an {\it even smaller block} if $i$ is even.
\end{itemize}
Let $\operatorname{fixb}$ (resp.~$\operatorname{olblock}$, $\operatorname{osblock}$, $\operatorname{elblock}$ and $\operatorname{esblock}$) denote the  
fixed block (resp. odd larger block, odd smaller block, even larger block and even smaller block) statistic.

Consider a {\it matching-generation algorithm}:
\begin{itemize}
  \item Let $\psi(M)\in\mm_{n+1}$ be obtained from $M$ by appending the block $(2n+1,2n+2)$;
  \item let $\psi_1(M,(i,j))\in\mm_{n+1}$ be obtained from $M$ by replacing the block $(i,j)$ with two blocks $(i,2n+1)(j,2n+2)$;
 \item let $\psi_2(M,(i,j))\in\mm_{n+1}$ be obtained from $M$ by replacing the block $(i,j)$ with two blocks $(j,2n+1)(i,2n+2)$.
\end{itemize}
It is clear that for any $M'\in\mm_{n+1}$, there is an $M\in\mm_n$ such that either $M'=\psi(M)$, or there is a block $(i,j)$ of $M$ 
such that $M'=\psi_1(M,(i,j))$ or $M'=\psi_2(M,(i,j))$. Conversely, using $\psi^{-1}$, $\psi_1^{-1}$ or 
$\psi_2^{-1}$, we can recover $\mm_n$ from $\mm_{n+1}$ by deleting or contracting the entries $2n+1$ and $2n+2$. 
More precisely, the deletion is defined by $\psi^{-1}$ and the contraction is defined by $\psi_1^{-1}$ or 
$\psi_2^{-1}$.
We say that an index $i$ is a {\it trace index} of $M$ if 
$(i,j)$ is a {\it fixed block} of $M$ or there exists a $k\in [n]$, after deleting or contracting the $2n-2k$ largest entries of $M$, 
the pair $(i,j)$ becomes a {\it fixed block} of the resulting matching.
Clearly, the entry $1$ is always a trace index. Let $\operatorname{trace}(M)$ be the number of trace indices of $M$.

\begin{example}
The matching $({1},3)(2,4)({6},7)({5},8)({9},{10})$ has three trace indices $1$, $5$ and $9$,
while $(2,4)({5},7)(6,8)(3,9)({1},10)$ has two trace indices $1$ and $5$.
\end{example}

The {\it $(s,t)$-even-odd larger matching polynomials} are defined by
$$M_n(x,y,s,t):=\sum_{M\in\mm_n}x^{\operatorname{elblock}(M)}y^{\operatorname{olblock}(M)}s^{\operatorname{fixb}(M)}t^{\operatorname{trace}(M)}.$$
In particular, $M_1(x,y,s,t)=st,~M_2(x,y,s,t)=(st)^2+2txy$ and $M_3(x,y,s,t)=(st)^3+6st^2xy+4txy(x+y)$.
We can now present the first main result of this paper.
\begin{theorem}\label{mainthm01}
\begin{equation}\label{Mnxyst}
M_n(x,y,s,t)=M_n(y,x,s,t)=(2y)^nA_n\left(\frac{x}{y},\frac{s}{y},\frac{t}{2}\right),
\end{equation}
which yields that
\begin{equation}\label{Mnxyst02}
\begin{split}
        2^nA_n(x,p,q)&=\sum_{M\in\mm_n}x^{\operatorname{elblock}(M)}p^{\operatorname{fixb}(M)}(2q)^{\operatorname{trace}(M)}\\
        &=\sum_{M\in\mm_n}x^{\operatorname{olblock}(M)}p^{\operatorname{fixb}(M)}(2q)^{\operatorname{trace}(M)}.
    \end{split}
\end{equation}
\end{theorem}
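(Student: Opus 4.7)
The plan is to prove \eqref{Mnxyst} by showing that both sides satisfy the same recurrence in $n$ with matching initial condition; the symmetry $M_n(x,y,s,t)=M_n(y,x,s,t)$ and the corollary \eqref{Mnxyst02} then fall out as easy consequences.

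First, I would read off the effect of the matching-generation algorithm on the four statistics $(\operatorname{elblock},\operatorname{olblock},\operatorname{fixb},\operatorname{trace})$. For $\psi(M)$, the appended block $(2n+1,2n+2)$ is a fixed block whose opener is a new trace index, no other block changes type, and no other trace index is created or destroyed; the overall contribution is a multiplicative factor $st$. For $\psi_1(M,(i,j))$ and $\psi_2(M,(i,j))$, a parity check on the closers shows that the two new blocks always consist of exactly one odd-larger block (closer $2n+1$) and one even-larger block (closer $2n+2$, never fixed since $i,j\leq 2n$), regardless of the type of $(i,j)$ in $M$. Moreover, the last block of the new matching is never fixed, so the first step of the inverse process is forced to be a contraction, which returns $M$ with all trace indices intact; the two new labels $2n+1,2n+2$ appear only as closers and hence never as openers of fixed blocks. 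Classifying the removed block $(i,j)$ as a fixb, olblock, or elblock and tallying the three weight ratios $xy/s$, $x$, $y$, respectively, leads to
\begin{equation*}
M_{n+1}(x,y,s,t)=st\,M_n(x,y,s,t)+2xy\,(\partial_x+\partial_y+\partial_s)M_n(x,y,s,t),\qquad M_1(x,y,s,t)=st.
\end{equation*}

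Next I would produce a parallel recurrence on the permutation side. Introduce the auxiliary polynomial
\begin{equation*}
B_n(u,v,p,q):=\sum_{\pi\in\msn}u^{\exc(\pi)}v^{\drop(\pi)}p^{\fix(\pi)}q^{\cyc(\pi)},
\end{equation*}
which is homogeneous of degree $n$ in $(u,v,p)$ since $\exc(\pi)+\drop(\pi)+\fix(\pi)=n$, and which specializes to $A_n(u,p,q)=B_n(u,1,p,q)$. Inserting $n+1$ after an entry $i$ in the cyclic representation of $\pi\in\msn$ gives four cases: a new singleton cycle (factor $pq$); splicing after a fixed point $i$ ($\fix-1$, $\exc+1$, $\drop+1$, factor $uv/p$); splicing after an excedance $i$ (new drop $n+1$, factor $v$); and splicing after a drop $i$ ($i$ turns into an excedance while $n+1$ becomes a drop, factor $u$). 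Collecting these contributions yields
\begin{equation*}
B_{n+1}(u,v,p,q)=pq\,B_n(u,v,p,q)+uv\,(\partial_u+\partial_v+\partial_p)B_n(u,v,p,q).
\end{equation*}
Specializing $(u,v,p,q)=(x,y,s,t/2)$ and multiplying by $2^{n+1}$, one sees that $2^n B_n(x,y,s,t/2)$ obeys exactly the recurrence above for $M_n$, and the base case matches since $2B_1(x,y,s,t/2)=2\cdot s\cdot(t/2)=st$. Hence $M_n(x,y,s,t)=2^n B_n(x,y,s,t/2)$, and homogeneity rewrites the right-hand side as $(2y)^n B_n(x/y,1,s/y,t/2)=(2y)^n A_n(x/y,s/y,t/2)$, which is \eqref{Mnxyst}.

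The symmetry $M_n(x,y,s,t)=M_n(y,x,s,t)$ is then immediate because the recurrence and initial condition for $M_n$ are invariant under swapping $x\leftrightarrow y$; equivalently, it reflects the classical $\exc$/$\drop$ symmetry on $\msn$ given by $\pi\mapsto\pi^{-1}$. For \eqref{Mnxyst02}, setting $y=1$ and relabeling $s=p$, $t=2q$ in \eqref{Mnxyst} gives the first identity, and the second follows from the symmetry $M_n(x,1,p,2q)=M_n(1,x,p,2q)$. The main obstacle is the $\operatorname{trace}$ analysis in the first step: since $\operatorname{trace}$ is defined recursively via iterated deletion/contraction, one must verify carefully that it is preserved by $\psi_1,\psi_2$ and grows by exactly one under $\psi$. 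The decisive observation is that after $\psi_1$ or $\psi_2$ the last block is never fixed, so the first unwinding step is forced to be a contraction that returns $M$, after which precisely the trace indices of $M$ are recovered.
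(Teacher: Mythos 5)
Your proof is correct and follows essentially the same route as the paper: the two insertion recurrences you derive (for $M_n$ via the matching-generation algorithm and for the quadruple $(\exc,\drop,\fix,\cyc)$ via cycle insertion) are precisely the content of the paper's grammars $G_1$ and $G$ in Lemmas~\ref{Lemma-matchings} and~\ref{lemma001exc}, and your substitution $(u,v,p,q)=(x,y,s,t/2)$ with the $2^n$ scaling and the homogeneity step $\drop=n-\exc-\fix$ is exactly the paper's change of grammar. The only difference is presentational — differential recurrences in place of Chen's grammar formalism, and a re-derivation of the permutation lemma that the paper simply cites.
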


Combining~\eqref{Anxpq-EGF} and~\eqref{Mnxyst}, we see that 
\begin{equation}\label{MNEGF01}
\sum_{n=0}^\infty M_n(x,y,s,t)\frac{z^n}{n!}=\left(\frac{(y-x)\mathrm{e}^{2sz}}{y\mathrm{e}^{2xz}-x\mathrm{e}^{2yz}}\right)^{\frac{t}{2}}.
\end{equation}
Let $\stirling{n}{k}$ be the {\it signless Stirling numbers of the first kind}.
It follows from~\eqref{An11q} and~\eqref{Mnxyst02} that 
$$\sum_{M\in\mm_n}q^{\operatorname{trace}(M)}=q(q+2)(q+4)(q+6)\cdots(q+2n-2)=\sum_{k=1}^n2^{n-k}\stirling{n}{k}q^k.$$

It is well known that every permutation in $\msn$ can be written as the product of two involutions, 
see~\cite{Petersen13,Yang13}. Hence a natural idea is to write some enumerators over $\msn$ as the convolutions of some enumerators over involutions.
Comparing~\eqref{MNEGF01} with~\eqref{Anxpq-EGF}, we get the following result.
\begin{corollary}\label{convolution}
We have 
$$ 2^nA_n(x,p,q)=\sum_{k=0}^n\binom{n}{k}\sum_{M\in\mm_k}x^{\operatorname{elblock}(M)}p^{\operatorname{fixb}(M)}q^{\operatorname{trace}(M)}
\sum_{M'\in\mm_{n-k}}x^{\operatorname{elblock}(M')}p^{\operatorname{fixb}(M')}q^{\operatorname{trace}(M')}.$$
\end{corollary}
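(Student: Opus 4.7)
The plan is to deduce Corollary~\ref{convolution} directly from the two exponential generating functions \eqref{Anxpq-EGF} and \eqref{MNEGF01} by a single specialization followed by comparison of coefficients. Set
\[
B_n(x,p,q):=\sum_{M\in\mm_n}x^{\operatorname{elblock}(M)}p^{\operatorname{fixb}(M)}q^{\operatorname{trace}(M)},
\]
so that $B_n(x,p,q)=M_n(x,1,p,q)$. First I would specialize the master identity \eqref{MNEGF01} at $y=1$, $s=p$, $t=q$, which immediately gives
\[
\sum_{n=0}^\infty B_n(x,p,q)\frac{z^n}{n!}=\left(\frac{(1-x)\mathrm{e}^{2pz}}{\mathrm{e}^{2xz}-x\mathrm{e}^{2z}}\right)^{\!q/2}.
\]

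Next I would square both sides to obtain
\[
\left(\sum_{n=0}^\infty B_n(x,p,q)\frac{z^n}{n!}\right)^{\!2}=\left(\frac{(1-x)\mathrm{e}^{2pz}}{\mathrm{e}^{2xz}-x\mathrm{e}^{2z}}\right)^{\!q}.
\]
On the other hand, substituting $z\mapsto 2z$ in \eqref{Anxpq-EGF} gives
\[
\sum_{n=0}^\infty 2^n A_n(x,p,q)\frac{z^n}{n!}=\left(\frac{(1-x)\mathrm{e}^{2pz}}{\mathrm{e}^{2xz}-x\mathrm{e}^{2z}}\right)^{\!q},
\]
so the right-hand sides match. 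The identity
\[
\sum_{n=0}^\infty 2^n A_n(x,p,q)\frac{z^n}{n!}=\left(\sum_{n=0}^\infty B_n(x,p,q)\frac{z^n}{n!}\right)^{\!2}
\]
then follows, and extracting the coefficient of $z^n/n!$ via the standard binomial convolution yields exactly the claimed formula.

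There is essentially no obstacle: the only subtlety is bookkeeping the substitutions so that the exponent $q/2$ in \eqref{MNEGF01} squares to $q$ in \eqref{Anxpq-EGF} and the factor $\mathrm{e}^{2pz}$ in the squared EGF matches the factor $\mathrm{e}^{pz}$ of \eqref{Anxpq-EGF} after $z\mapsto 2z$; this is why the specialization uses $t=q$ rather than $t=2q$ and why the factor $2^n$ appears on the left of the corollary. Since Theorem~\ref{mainthm01} has already been established, the corollary is a formal consequence of that theorem together with the Ksavrelof--Zeng generating function \eqref{Anxpq-EGF}.
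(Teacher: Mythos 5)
Your proposal is correct and follows exactly the paper's route: the paper proves this corollary in one line by ``comparing \eqref{MNEGF01} with \eqref{Anxpq-EGF}'', which is precisely the specialization $y=1$, $s=p$, $t=q$ in \eqref{MNEGF01}, the squaring of the resulting series, and the rescaling $z\mapsto 2z$ in \eqref{Anxpq-EGF} that you carry out explicitly. Your write-up simply supplies the bookkeeping the paper leaves implicit, and it is accurate.
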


The $p=x$ and $q=1$ case of Corollary~\ref{convolution} was also discussed in~\cite{MaYeh17}.
In~\cite[Corollary~7.4]{Brenti00}, Brenti discovered that 
$A_n(x,1,-1)=-(x-1)^{n-1}$.
Subsequently, Ksavrelof and Zeng~\cite{Zeng022} provided bijective proofs for $A_n(x,1,-1)=-(x-1)^{n-1}$ and 
$A_n(x,0,-1)=-x-x^2-\cdots-x^{n-1}$.
Therefore, the following result is immediate.
\begin{corollary}\label{cor2}
For $n\geqslant 1$, we have 
\begin{align*}
&\sum_{M\in\mm_n}x^{\operatorname{elblock}(M)}(-2)^{\operatorname{trace}(M)}=-2^n(x-1)^{n-1},\\
&\sum_{\substack{M\in\mm_n\\\operatorname{fixb}(M)=0 }}x^{\operatorname{elblock}(M)}(-2)^{\operatorname{trace}(M)}=-2^n(x+x^2+\cdots+x^{n-1}).
\end{align*}
\end{corollary}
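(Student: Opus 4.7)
The plan is to observe that both identities are immediate specializations of equation~\eqref{Mnxyst02} in Theorem~\ref{mainthm01}, combined with the two known evaluations of the $(p,q)$-Eulerian polynomials cited from Brenti and from Ksavrelof--Zeng. No new combinatorics is needed; the work has already been done in establishing Theorem~\ref{mainthm01}, and the role of the corollary is simply to translate a classical closed form on the permutation side into a matching statistic identity.

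For the first identity, I would set $p=1$ and $q=-1$ in the top line of~\eqref{Mnxyst02}, which yields
\begin{equation*}
\sum_{M\in\mm_n}x^{\operatorname{elblock}(M)}(-2)^{\operatorname{trace}(M)}=2^nA_n(x,1,-1).
\end{equation*}
Applying Brenti's evaluation $A_n(x,1,-1)=-(x-1)^{n-1}$ (valid for $n\geqslant 1$) on the right-hand side gives exactly the claimed formula.

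For the second identity, I would set $p=0$ and $q=-1$ in~\eqref{Mnxyst02}. The factor $0^{\operatorname{fixb}(M)}$ kills every matching with at least one fixed block, so only matchings with $\operatorname{fixb}(M)=0$ survive, using the convention $0^0=1$. This reduces the left-hand side to the restricted sum stated in the corollary. The right-hand side becomes $2^nA_n(x,0,-1)$, and the Ksavrelof--Zeng evaluation $A_n(x,0,-1)=-x-x^2-\cdots-x^{n-1}$ (for $n\geqslant 1$) finishes the proof.

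Since the whole argument is a substitution, there is no real obstacle; the only point that might be worth pausing over is the convention $0^0=1$ needed to justify that the $p=0$ specialization correctly extracts the $\operatorname{fixb}(M)=0$ stratum, but this is standard. Note that an entirely analogous proof could be given using the bottom line of~\eqref{Mnxyst02}, producing a variant of Corollary~\ref{cor2} in which $\operatorname{elblock}$ is replaced by $\operatorname{olblock}$; by the $x\leftrightarrow y$ symmetry in~\eqref{Mnxyst} this variant carries no new content, but it could be mentioned as a remark.
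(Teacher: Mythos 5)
Your proof is correct and follows exactly the route the paper intends: Corollary~\ref{cor2} is stated in the paper as an immediate consequence of~\eqref{Mnxyst02} together with the Brenti and Ksavrelof--Zeng evaluations $A_n(x,1,-1)=-(x-1)^{n-1}$ and $A_n(x,0,-1)=-x-x^2-\cdots-x^{n-1}$, which is precisely your substitution argument. The remarks about the $0^0=1$ convention for the $p=0$ specialization and the $\operatorname{olblock}$ variant are accurate but add nothing beyond what the paper already implies.
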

\subsection{Proof of Theorem~\ref{mainthm01}}
\hspace*{\parindent}

A {\it context-free grammar} (also known as {\it Chen's grammar}~\cite{Chen93,Dumont96}) $G$ over an alphabet
$V$ is defined as a set of substitution rules replacing a letter in $V$ by a formal function over $V$.
The formal derivative $D_G$ with respect to $G$ satisfies the derivation rules:
$$D_G(u+v)=D_G(u)+D_G(v),~D_G(uv)=D_G(u)v+uD_G(v).$$
Chen~\cite{Chen93} found that if $G=\{a\rightarrow ab,~b\rightarrow b\}$, then $D_G(a)=ab,~D_G(b)=b$, and $D_G^n(a)=a\sum_{k=0}^n\Stirling{n}{k}b^k$,
where $\Stirling{n}{k}$ is {\it the Stirling number of the second kind}. 
In~\cite{Dumont96}, Dumont obtained the grammar for Eulerian polynomials by using a grammatical labeling for circular permutations.
\begin{proposition}[{\cite[Section~2.1]{Dumont96}}]\label{grammar03}
Let $G=\{a\rightarrow ab, b\rightarrow ab\}$.
For any $n\geqslant 1$, one has
\begin{equation*}
D_{G}^n(a)=D_{G}^n(b)=ab^{n}A_n\left(\frac{a}{b}\right).
\end{equation*}
\end{proposition}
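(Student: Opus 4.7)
The plan is a direct induction on $n$. Before starting, I would note the basic observation that $D_G(a) = ab = D_G(b)$, hence $D_G^n(a) = D_G^{n-1}(D_G(a)) = D_G^{n-1}(D_G(b)) = D_G^n(b)$ for all $n \geqslant 1$. Thus the two claimed equalities collapse into a single statement: $D_G^n(a) = ab^n A_n(a/b)$.

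For the induction, I would work with the homogenized form
\[
E_n(a,b) := ab^n A_n(a/b) = \sum_{k=0}^{n-1} \Eulerian{n}{k} a^{k+1} b^{n-k},
\]
so the target identity becomes $D_G^n(a) = E_n(a,b)$. The base case $n=1$ is immediate since $D_G(a)=ab$ and $E_1(a,b)=ab \cdot A_1(a/b)=ab$. For the inductive step, using $D_G(a)=D_G(b)=ab$ and the Leibniz rule,
\[
D_G(a^{k+1} b^{n-k}) = (k+1)\,a^{k+1} b^{n-k+1} + (n-k)\,a^{k+2} b^{n-k}.
\]
After summing against the Eulerian coefficients and combining like terms, the coefficient of $a^{j+1} b^{n+1-j}$ in $D_G(E_n)$ is $(j+1)\Eulerian{n}{j} + (n-j+1)\Eulerian{n}{j-1}$, which equals $\Eulerian{n+1}{j}$ by the classical Eulerian recurrence $\Eulerian{n+1}{j} = (j+1)\Eulerian{n}{j} + (n+1-j)\Eulerian{n}{j-1}$. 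Hence $D_G^{n+1}(a) = E_{n+1}(a,b)$, closing the induction.

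The only genuine content is that the action of the grammar on the monomials $a^{k+1}b^{n-k}$ reproduces the Eulerian recurrence, and this is routine once the homogenized form is in hand. A more structural alternative would follow Dumont's own strategy via a grammatical labeling of circular permutations: one attaches a weight of $a$ or $b$ to each position according to whether it contributes to $\exc$ or $\drop$, verifies that the insertion of a new maximum entry at each site reproduces the rules $a \to ab$ and $b \to ab$, and reads off the identity directly. I would choose the induction above for brevity, but the labeling approach has the virtue of explaining \emph{why} the grammar encodes the excedance/drop statistics on permutations in the first place.
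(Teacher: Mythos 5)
Your proof is correct. Note, though, that the paper does not actually prove this proposition: it is quoted from Dumont's paper, and the argument alluded to there is precisely the combinatorial one you mention as an alternative --- a grammatical labeling of circular permutations in which each gap contributing to an excedance (resp.\ drop) carries the label $a$ (resp.\ $b$), so that inserting a new largest element realizes the rules $a\to ab$ and $b\to ab$ and the identity is read off directly. Your route is the purely computational one: homogenize to $E_n(a,b)=\sum_{k}\Eulerian{n}{k}a^{k+1}b^{n-k}$, apply the Leibniz rule, and match the resulting coefficient $(j+1)\Eulerian{n}{j}+(n+1-j)\Eulerian{n}{j-1}$ against the classical Eulerian recurrence. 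Both are complete; the induction is shorter and self-contained given the recurrence, while the labeling argument explains why the grammar encodes the excedance/drop statistics and is the template the present paper reuses for its own Lemmas (e.g.\ the labelings of matchings and of matching permutations in Sections 2 and 3). Your preliminary observation that $D_G^n(a)=D_G^n(b)$ follows from $D_G(a)=D_G(b)=ab$ is also the right way to collapse the two claimed equalities into one.
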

Since the work of Dumont, it has been observed that the grammatical calculus 
has remarkable applications in combinatorics, see~\cite{Chen23,Ji25,Ma19,Ma26}. For instance, using a grammatical description of the exponential formula,
Ji~\cite{Ji25} investigated the Euler-Stirling statistics on permutations.

The following result is fundamental. 
\begin{lemma}[{\cite[Lemma~3.12]{Ma24}}]\label{lemma001exc}
For $G=\{I\rightarrow Ipq, p\rightarrow xy, x\rightarrow xy, y\rightarrow xy, q\rightarrow 0\}$, we have
$$D_G^n(I)=I\sum_{\pi\in\msn}x^{\exc(\pi)}y^{\drop(\pi)}p^{\fix(\pi)}q^{\cyc(\pi)}.$$
\end{lemma}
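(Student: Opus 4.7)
The plan is to prove the identity by induction on $n$ using a grammatical labeling of permutations written in cycle notation, so that the formal derivative $D_G$ encodes the standard insertion of the largest element. For $\pi\in\msn$, I would assign to each index $i\in[n]$ the label $p$ if $i$ is a fixed point, $x$ if $i$ is an excedance, and $y$ if $i$ is a drop, attach an additional factor of $q$ to each cycle of $\pi$, and keep a single factor of $I$ outside. The resulting weight is $w(\pi)=I\,x^{\exc(\pi)}y^{\drop(\pi)}p^{\fix(\pi)}q^{\cyc(\pi)}$, so that the target identity becomes $D_G^n(I)=\sum_{\pi\in\msn}w(\pi)$. The base case $n=0$ is immediate since both sides equal $I$.

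For the inductive step I would use the classical ``insertion of $n+1$'' bijection $\msn\times[n+1]\to\msnn$: either append $n+1$ as a new singleton cycle, or pick an index $i\in[n]$ and splice $n+1$ into the cycle of $i$ by setting $\pi'(i)=n+1$ and $\pi'(n+1)=\pi(i)$. A short four-case analysis on the type of $i$ records how each statistic changes under splicing. Appending $n+1$ as a new singleton adds one fixed point and one cycle, which matches the rule $I\to Ipq$. Splicing after an excedance $i$ preserves the excedance at $i$ and creates a new drop at $n+1$, matching $x\to xy$. Splicing after a drop $i$ turns $i$ into an excedance while the drop now lives at $n+1$, matching $y\to xy$. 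Splicing after a fixed point $i$ destroys the fixed point and produces one excedance (at $i$) and one drop (at $n+1$), matching $p\to xy$. In every splicing case $n+1$ is inserted into an existing cycle, so $\cyc$ is unchanged, which is exactly the reason for the rule $q\to 0$.

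Applying $D_G$ to $w(\pi)$ via the Leibniz rule then produces precisely the sum of $w(\pi')$ over the $n+1$ permutations $\pi'\in\msnn$ reachable from $\pi$ by this insertion. Summing over $\pi\in\msn$ and invoking the inductive hypothesis $D_G^n(I)=\sum_{\pi\in\msn}w(\pi)$ will give $D_G^{n+1}(I)=\sum_{\pi'\in\msnn}w(\pi')$, closing the induction.

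The main obstacle I anticipate is the cycle bookkeeping. One needs to confirm that every splicing insertion leaves $\cyc$ unchanged, so that the only source of new cycles is the $I\to Ipq$ step; this is what morally forces and justifies the degenerate rule $q\to 0$. Once this is checked, the four statistic transitions are a direct unpacking of the definitions of $\exc,\drop,\fix$, and the Leibniz rule handles the multiplicities (each of the $\exc(\pi)$ choices of $x$-labeled index, etc.) automatically.
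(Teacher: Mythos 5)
Your proof is correct, but there is nothing in the paper to compare it against: the paper states this lemma as a quotation of \cite[Lemma~3.12]{Ma24} and gives no proof of it. What you have written is a self-contained proof, and it uses exactly the technique that the paper deploys for the lemmas it does prove, namely a grammatical labeling together with an insertion-of-the-largest-element induction (compare the proofs of Lemma~\ref{Lemma-matchings} and Lemma~\ref{lemma02}). Your case analysis is sound, and it is worth recording that the Leibniz expansion confirms it exactly: with $w(\pi)=I\,x^{\exc(\pi)}y^{\drop(\pi)}p^{\fix(\pi)}q^{\cyc(\pi)}$ one computes
\begin{equation*}
D_G\bigl(w(\pi)\bigr)=pq\,w(\pi)+\exc(\pi)\,y\,w(\pi)+\drop(\pi)\,x\,w(\pi)+\fix(\pi)\,\frac{xy}{p}\,w(\pi),
\end{equation*}
and these four terms are precisely the weights of the $n+1$ insertions: appending the singleton cycle multiplies the weight by $pq$; splicing at an excedance keeps the excedance at $i$ and creates a drop at $n+1$ (factor $y$); splicing at a drop converts $i$ into an excedance while the drop migrates to $n+1$ (factor $x$); splicing at a fixed point trades the factor $p$ for $xy$. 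Your observation that every splice leaves $\cyc$ unchanged (even when $i$ is a fixed point, since the $1$-cycle $(i)$ simply becomes the $2$-cycle $(i,n+1)$) is indeed the precise justification for the rule $q\rightarrow 0$: the Leibniz terms coming from differentiating $q$-labels must contribute nothing, because no insertion corresponds to them. Since the insertion map $\msn\times\{0,1,\ldots,n\}\to\msnn$ is a bijection (invert it by locating and removing $n+1$), summing over $\pi$ and invoking the inductive hypothesis completes the argument. The only stylistic gap is that you should state this bijectivity explicitly, as it is what guarantees that $D_G^{n+1}(I)$ counts each element of $\msnn$ exactly once; but this is standard and does not affect correctness.
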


We now give a variant of Lemma~\ref{lemma001exc}.
\begin{lemma}\label{Lemma-matchings}
For $G_1=\{J\rightarrow Jst, s\rightarrow 2ab, a\rightarrow 2ab, b\rightarrow 2ab, t\rightarrow 0\}$, we have
$$D_{G_1}^n(J)=J\sum_{M\in\mm_n}a^{\operatorname{elblock}(M)}b^{\operatorname{olblock}(M)}s^{\operatorname{fixb}(M)}t^{\operatorname{trace}(M)}.$$
\end{lemma}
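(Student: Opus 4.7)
The plan is to prove this by induction on $n$, using a grammatical labeling of $\mm_n$ that mirrors the matching-generation algorithm $(\psi, \psi_1, \psi_2)$ described above. For each $M \in \mm_n$, set
$$w(M) := J \cdot a^{\operatorname{elblock}(M)} b^{\operatorname{olblock}(M)} s^{\operatorname{fixb}(M)} t^{\operatorname{trace}(M)},$$
and aim to show $D_{G_1}^n(J) = \sum_{M \in \mm_n} w(M)$. The base case $n = 0$ is immediate, since the empty matching contributes $J = D_{G_1}^0(J)$.

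For the inductive step, recall that every $M' \in \mm_{n+1}$ arises uniquely from some $M \in \mm_n$ as either $\psi(M)$, or $\psi_1(M,\alpha)$ or $\psi_2(M,\alpha)$ for a block $\alpha$ of $M$. The plan is to show that applying $D_{G_1}$ to $w(M)$ via the Leibniz rule reproduces precisely the sum of $w(M')$ over these $2n+1$ successors of $M$. The factor $J$ differentiates to $Jst$, which encodes both the new fixed block $(2n+1, 2n+2)$ and the new trace index $2n+1$ created by $\psi(M)$. Each factor $s$, $a$, or $b$ in $w(M)$ differentiates to $2ab$, and the two resulting $ab$ terms are to be matched with the outputs of $\psi_1$ and $\psi_2$ applied to the corresponding fixed, even-larger, or odd-larger block of $M$.

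The main obstacle is a careful case-by-case check of how the five block types transform under $\psi_1$ and $\psi_2$. For any block $(i,j)$ of $M$, the two new blocks always have closers $2n+1$ and $2n+2$: the one with closer $2n+1$ is automatically an odd-larger block, while the one with closer $2n+2$ has opener in $\{i,j\} \subseteq [2n]$, ruling out both the fixed condition (which would require opener $2n+1$) and the degenerate equality $j = i+1$, hence is an even-larger block. Consequently, each of $\psi_1, \psi_2$ performs the substitution $s \to ab$, $a \to ab$, or $b \to ab$ on the relevant letter of $w(M)$, matching the grammar rules $s \to 2ab$, $a \to 2ab$, $b \to 2ab$.

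Finally, the trace statistic behaves as prescribed by $t \to 0$: it is preserved under $\psi_1, \psi_2$ and increased by exactly one under $\psi$. Both facts follow directly from the recursive definition of trace indices via $\psi^{-1}, \psi_1^{-1}, \psi_2^{-1}$: inverting $\psi_1$ or $\psi_2$ returns $M$ without producing a fixed block in the contraction step, so no new trace index arises, whereas inverting $\psi$ removes the fresh fixed block $(2n+1, 2n+2)$, so $2n+1$ is the unique new trace index of $\psi(M)$. Combining the four contributions of the Leibniz rule and summing over $M \in \mm_n$ yields $D_{G_1}^{n+1}(J) = \sum_{M' \in \mm_{n+1}} w(M')$, completing the induction.
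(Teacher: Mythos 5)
Your proposal is correct and follows essentially the same route as the paper: an induction driven by the matching-generation algorithm $(\psi,\psi_1,\psi_2)$, in which the paper's ``grammatical labeling'' of blocks and trace indices is exactly your Leibniz-rule bookkeeping on the weight monomial, with $J\to Jst$ matching $\psi$ and the factor $2$ in $s,a,b\to 2ab$ matching the two contractions $\psi_1,\psi_2$. Your explicit check that the block with closer $2n+1$ is odd-larger and the one with closer $2n+2$ is even-larger, and that the trace statistic is fixed by $\psi_1,\psi_2$ and incremented by $\psi$, is precisely the case analysis the paper carries out.
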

\begin{proof}
We first introduce a labeling of matchings as follows. Label each
trace index by a subscript label $t$, each fixed block by a superscript label $s$, an even larger block by a superscript label $a$,
an odd larger block by a superscript label $b$ and put a subscript label $J$ at the end.
For example, 
the labeled version of $(2,4)({5},7)(6,8)(3,9)({1},10)(11,12)$ is given below:
\begin{equation}\label{ex01}
{\overbrace{(2,4)}^a\overbrace{(5_t,7)}^b\overbrace{(6,8)}^a\overbrace{(3,9)}^b\overbrace{(1_t,10)}^a{\overbrace{(11_t,12)}^s}}_J.
\end{equation}
The weight of $M$ is given by the product of its labels. For example, the weight of~\eqref{ex01} is $Ja^3b^2st^3$.
The labeled elements in $\mm_1$ and $\mm_2$ can be respectively listed as follows:
$${\overbrace{(1_t,2)}^s}_J,~{\overbrace{(1_t,2)}^s\overbrace{(3_t,4)}^s}_J,~{\overbrace{(1_t,3)}^b\overbrace{(2,4)}^a}_J,~{\overbrace{(2,3)}^b\overbrace{(1_t,4)}^a}_J.$$

Note that $D_{G_1}(J)=Jst$ and $D_{G_2}^2(J)=Js^2t^2+2Jabt$. Hence the result holds for $n=1$ and $n=2$. 
Suppose we get all labeled matchings in $\mm_{n}$, where $n\geqslant 2$. Let
$\widehat{M}$ be obtained from $M\in\mm_{n}$ by using $\psi$, $\psi_1$ or 
$\psi_2$. We have the following possibilities:
\begin{itemize}
  \item [\rm ($c_1$)] if $\widehat{M}$ is obtained from $M$ by appending the block $(2n+1,2n+2)$, then this operation corresponds to the substitution $J\rightarrow Jst$,   
which can be illustrated as follows:
$$\cdots(i,2n)_J\rightarrow {\cdots(i,2n)\overbrace{((2n+1)_t,2n+2)}^s}_J.$$
 \item [\rm ($c_2$)] if $\widehat{M}$ is obtained from $M$ by replacing the block $(i,j)$ with two blocks $(i,2n+1)(j,2n+2)$ or $(j,2n+1)(i,2n+2)$, then 
this operation corresponds to the substitution $s\rightarrow 2ab$, $a\rightarrow 2ab$ or $b\rightarrow 2ab$. More precisely, for a fixed block, the changes of labels can be illustrated as follows: 
$$\cdots\overbrace{((2k-1)_t,2k)}^s\cdots=\left\{\begin{array}{lll}
\cdots\overbrace{((2k-1)_t,2n+1)}^b\overbrace{(2k,2n+2)}^a\cdots,&\\
\cdots\overbrace{(2k,2n+1)}^b\overbrace{((2k-1)_t,2n+2)}^a\cdots.&
\end{array}\right.$$
For an even or odd larger block, the changes of labels can be illustrated as follows: 
$$\cdots\overbrace{(i,2k)}^a\cdots=\left\{\begin{array}{lll}
\cdots\overbrace{(i,2n+1)}^b\overbrace{(2k,2n+2)}^a\cdots,&\\
\cdots\overbrace{(2k,2n+1)}^b\overbrace{(i,2n+2)}^a\cdots;&
\end{array}\right.$$
$$\cdots\overbrace{(i,2k+1)}^b\cdots=\left\{\begin{array}{lll}
\cdots\overbrace{(i,2n+1)}^b\overbrace{(2k+1,2n+2)}^a\cdots,&\\
\cdots\overbrace{(2k+1,2n+1)}^b\overbrace{(i,2n+2)}^a\cdots.&
\end{array}\right.$$
\end{itemize}
Since each case corresponds to an application of a substitution rule in $G_1$, we see that the action of $D_{G_1}$
on the set of labeled matchings in $\mm_{n}$ gives the set of labeled matchings in $\mm_{n+1}$, and so $D_{G_1}^{n+1}(J)$ equals the sum of weights of matchings in $\mm_{n+1}$.
This yields the desired result.
\end{proof}

\noindent{\bf A proof of Theorem~\ref{mainthm01}:}
\begin{proof}
Let $G$ be the grammar given in Lemma~\ref{lemma001exc}. Consider a change of $G$. Setting
$I=J$, $p=2s$, $q=\frac{t}{2}$, $x=2a$ and $y=2b$, then $D_G(J)=Jst$ and $D_G(s)=D_G(a)=D_G(b)=2ab$,
so $G$ can be recast as the grammar $G_1$. 
Substituting $p=2s$, $q=\frac{t}{2}$, $x=2a$ and $y=2b$ in Lemma~\ref{lemma001exc}, we get 
$$M_n(a,b,s,t)=\sum_{\pi\in\msn}(2a)^{\exc(\pi)}(2b)^{\drop(\pi)}(2s)^{\fix(\pi)}\left(\frac{t}{2}\right)^{\cyc(\pi)}.$$
Note that $\drop(\pi)=n-\exc(\pi)-\fix(\pi)$ for $\pi\in\msn$. Then we obtain
\begin{align*}
M_n(a,b,s,t)&=\sum_{\pi\in\msn}(2a)^{\exc(\pi)}(2b)^{n-\exc(\pi)-\fix(\pi)}(2s)^{\fix(\pi)}\left(\frac{t}{2}\right)^{\cyc(\pi)}=(2b)^nA_n\left(\frac{a}{b},\frac{s}{b},\frac{t}{2}\right).
\end{align*}
This completes the proof.
\end{proof}
\subsection{$q$-derangement polynomials and $(p,q)$-Eulerian polynomials of type $B$}
\hspace*{\parindent}

As discussed in~\cite{Branden22,Chen09,Chow09,Hwang20,Jin23,Ma24,Ma25,Mongelli13,Sokal22,Yan26},
by special parametrizations, the $(p,q)$-Eulerian polynomials $A_n(x,p,q)$ can be transformed into several well-studied Eulerian-type polynomials, including
big ascent polynomials, 
$q$-derangement polynomials, $1/k$-Eulerian polynomials and $r$-colored Eulerian polynomials. Therefore, by Theorem~\ref{mainthm01}, we can interpret 
these polynomials as enumerators of matchings. 
In the sequel we collect some examples as illustrations.

A permutation $\pi\in\msn$ is a {\it derangement} if $\fix(\pi)=0$. Let $\md_n$ be the set of all derangements in $\msn$. A well known explicit formula for the derangement numbers $d_n:=\#\md_n$ is given as follows:
$d_n=n!\sum_{i=0}^n\frac{(-1)^i}{i!}$.
Using~\eqref{Mnxyst02}, we get the following result, where $(2n)!!=2^nn!$.
\begin{corollary}
We have
$$\sum_{\substack{M\in\mm_n\\ \operatorname{fixb}(M)=0}}2^{\operatorname{trace}(M)}=(2n)!!\sum_{i=0}^n\frac{(-1)^i}{i!}.$$
\end{corollary}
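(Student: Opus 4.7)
The plan is to derive this as a direct specialization of Theorem~\ref{mainthm01}, specifically the second identity in~\eqref{Mnxyst02}. First I would set $x=1$, $p=0$, and $q=1$ in the first line of~\eqref{Mnxyst02}, yielding
\begin{equation*}
2^n A_n(1,0,1) = \sum_{M\in\mm_n} 1^{\operatorname{elblock}(M)}\,0^{\operatorname{fixb}(M)}\,2^{\operatorname{trace}(M)} = \sum_{\substack{M\in\mm_n\\\operatorname{fixb}(M)=0}} 2^{\operatorname{trace}(M)},
\end{equation*}
since the factor $0^{\operatorname{fixb}(M)}$ kills every matching with at least one fixed block.

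Next I would identify the left-hand side with a derangement count. By the definition of the $(p,q)$-Eulerian polynomial,
\begin{equation*}
A_n(1,0,1) = \sum_{\pi\in\msn} 1^{\exc(\pi)}\,0^{\fix(\pi)}\,1^{\cyc(\pi)} = \#\{\pi\in\msn: \fix(\pi)=0\} = d_n.
\end{equation*}
Combining the two displays gives
\begin{equation*}
\sum_{\substack{M\in\mm_n\\\operatorname{fixb}(M)=0}} 2^{\operatorname{trace}(M)} = 2^n d_n = 2^n n!\sum_{i=0}^n \frac{(-1)^i}{i!} = (2n)!!\sum_{i=0}^n\frac{(-1)^i}{i!},
\end{equation*}
using the stated explicit formula $d_n = n!\sum_{i=0}^n (-1)^i/i!$ together with $(2n)!! = 2^n n!$.

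There is no real obstacle here: the entire content is in Theorem~\ref{mainthm01}, which was already established via the grammatical calculus of Lemma~\ref{Lemma-matchings}, and the statement reduces to substituting the three values $x=1$, $p=0$, $q=1$ and recognizing the derangement specialization of $A_n(x,p,q)$. The only subtle point worth mentioning explicitly is that the $p=0$ substitution is legitimate and forces $\operatorname{fixb}(M)=0$ on the combinatorial side, which is precisely how the constraint under the summation sign is produced.
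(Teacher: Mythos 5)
Your proposal is correct and is exactly the paper's argument: the paper derives this corollary by specializing \eqref{Mnxyst02} at $x=1$, $p=0$, $q=1$ and invoking $A_n(1,0,1)=d_n=n!\sum_{i=0}^n(-1)^i/i!$ together with $(2n)!!=2^nn!$. No difference in approach.
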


Let $\cda(\pi):=\#\{i:\pi^{-1}(i)<i<\pi(i)\}$ be the number of {\it cycle double ascents} of $\pi$.
Let $\md_{n,k}:=\{\pi\in \md_n: \cda(\pi)=0,~\exc(\pi)=k\}$. 
Using the theory of continued fractions, Shin and Zeng~\cite[Theorem~11]{Zeng12} obtained that
\begin{equation}\label{dnxq-def}
d_n(x,q)=\sum_{\pi\in\md_n}x^{\exc(\pi)}q^{\cyc(\pi)}=\sum_{k=1}^{\lrf{n/2}}\left(\sum_{\pi\in \md_{n,k}}q^{\cyc(\pi)}\right)x^{k}(1+x)^{n-2k}.
\end{equation}
Combining~\eqref{Mnxyst02} and~\eqref{dnxq-def}, we immediately get the following result.
\begin{proposition}\label{cor-dnk}
We have
$$\sum_{\substack{M\in\mm_n\\ \operatorname{fixb}(M)=0}}x^{\operatorname{elblock}(M)}q^{\operatorname{trace}(M)}=2^nd_n(x,q/2)=\sum_{k=1}^{\lrf{n/2}}\left(\sum_{\pi\in \md_{n,k}}q^{\cyc(\pi)}2^{n-\cyc(\pi)}\right)x^{k}(1+x)^{n-2k}.$$
\end{proposition}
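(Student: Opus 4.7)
The plan is to derive both equalities from the two ingredients already displayed in the excerpt, namely the identity~\eqref{Mnxyst02} of Theorem~\ref{mainthm01} and the Shin--Zeng $\gamma$-type expansion~\eqref{dnxq-def} of the $q$-derangement polynomial.

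For the first equality, I would specialize $p=0$ in~\eqref{Mnxyst02}. On the matching side, the factor $p^{\operatorname{fixb}(M)}$ becomes $0^{\operatorname{fixb}(M)}$, which is the indicator of $\operatorname{fixb}(M)=0$, so the sum collapses to
\begin{equation*}
\sum_{\substack{M\in\mm_n\\ \operatorname{fixb}(M)=0}}x^{\operatorname{elblock}(M)}(2q)^{\operatorname{trace}(M)}.
\end{equation*}
On the permutation side, the factor $p^{\fix(\pi)}$ at $p=0$ forces $\fix(\pi)=0$, so $A_n(x,0,q)$ is precisely the sum over derangements, i.e.\ $A_n(x,0,q)=d_n(x,q)$ in the notation of~\eqref{dnxq-def}. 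Equation~\eqref{Mnxyst02} therefore gives
\begin{equation*}
\sum_{\substack{M\in\mm_n\\ \operatorname{fixb}(M)=0}}x^{\operatorname{elblock}(M)}(2q)^{\operatorname{trace}(M)}=2^n d_n(x,q),
\end{equation*}
and replacing $q$ by $q/2$ yields the first equality of the proposition.

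For the second equality, I would insert~\eqref{dnxq-def} with $q$ replaced by $q/2$ into $2^n d_n(x,q/2)$ and distribute the factor $2^n$ across each term using $2^n(q/2)^{\cyc(\pi)}=q^{\cyc(\pi)}2^{n-\cyc(\pi)}$, which matches the stated right-hand side verbatim.

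There is no real obstacle here: once Theorem~\ref{mainthm01} is available, the only substantive input is the Shin--Zeng formula~\eqref{dnxq-def}, and both equalities follow by direct substitution. The only point to be a little careful about is the convention $0^0=1$ used to justify $0^{\operatorname{fixb}(M)}=\mathbf{1}_{\operatorname{fixb}(M)=0}$ in the specialization $p=0$, which is standard in this setting.
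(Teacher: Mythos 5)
Your proposal is correct and is exactly the argument the paper intends: the paper's proof consists of the single remark that the result follows by combining~\eqref{Mnxyst02} and~\eqref{dnxq-def}, which is precisely your specialization $p=0$ (giving $A_n(x,0,q)=d_n(x,q)$ and the restriction to $\operatorname{fixb}(M)=0$) followed by the substitution $q\mapsto q/2$ and the rewriting $2^n(q/2)^{\cyc(\pi)}=q^{\cyc(\pi)}2^{n-\cyc(\pi)}$. No gaps; your care about the convention $0^0=1$ is appropriate but routine.
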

More generally, combining~\eqref{Mnxyst} and~\cite[Theorems~3.4]{Ma24}, we obtain the following result.
\begin{corollary}\label{cor-Mnxyst}
There exist nonnegative integer coefficient polynomials $\gamma_{n,i,j}(t)$ such that 
$$M_n(x,y,s,t)=\sum_{i=0}^ns^i\sum_{j=0}^{\lrf{(n-i)/2}}2^n\gamma_{n,i,j}(t/2)(xy)^j(x+y)^{n-i-2j}.$$
\end{corollary}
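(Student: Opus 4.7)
The plan is to deduce Corollary~\ref{cor-Mnxyst} by combining Theorem~\ref{mainthm01} with the $\gamma$-type expansion of the trivariate $(p,q)$-Eulerian polynomials $A_n(x,p,q)$ established in~\cite[Theorem~3.4]{Ma24}. The key feature of Theorem~\ref{mainthm01} is that it converts the four-variable matching polynomial into an Eulerian polynomial in three variables with rational arguments; after clearing denominators, these arguments collapse cleanly into the basis elements $(xy)^j(x+y)^{n-i-2j}$ appearing on the right-hand side.

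First, I would invoke Theorem~\ref{mainthm01} to write
\[ M_n(x,y,s,t) = (2y)^n A_n\!\left(\frac{x}{y},\,\frac{s}{y},\,\frac{t}{2}\right). \]
Next, I would quote the $\gamma$-expansion from~\cite[Theorem~3.4]{Ma24}, which provides nonnegative integer coefficient polynomials $\gamma_{n,i,j}(q)$ satisfying
\[ A_n(x,p,q) = \sum_{i=0}^n p^i \sum_{j=0}^{\lrf{(n-i)/2}} \gamma_{n,i,j}(q)\, x^j (1+x)^{n-i-2j}. \]
The consistency of the parameter matching is already visible in Proposition~\ref{cor-dnk}: the $s=0$, $y=1$ specialization reproduces the $\gamma$-expansion of the $q$-derangement polynomials $d_n(x,q)$ given in~\eqref{dnxq-def}, with $\gamma_{n,0,k}(q) = \sum_{\pi\in\md_{n,k}} q^{\cyc(\pi)}$.

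Substituting $x\mapsto x/y$, $p\mapsto s/y$, $q\mapsto t/2$ into the $\gamma$-expansion and multiplying by $(2y)^n$, the $(i,j)$-summand becomes
\[ 2^n y^n \cdot \frac{s^i}{y^i} \cdot \frac{x^j}{y^j} \cdot \frac{(x+y)^{n-i-2j}}{y^{n-i-2j}} \cdot \gamma_{n,i,j}(t/2). \]
The total exponent of $y$ is $n - i - j - (n-i-2j) = j$, so the summand simplifies to $2^n \gamma_{n,i,j}(t/2)\, s^i (xy)^j (x+y)^{n-i-2j}$. Reassembling the double sum yields the claimed identity, and the nonnegativity and integrality of the coefficients of the polynomials $\gamma_{n,i,j}(t)$ carry over verbatim from the input.

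The argument has essentially no serious obstacle; the only step requiring attention is tracking the exponents of $y$ so that all fractional powers cancel and the rational arguments of $A_n$ collapse exactly into the polynomial basis $(xy)^j(x+y)^{n-i-2j}$. Once the exponent arithmetic is verified, the corollary is a direct algebraic translation of~\cite[Theorem~3.4]{Ma24} through Theorem~\ref{mainthm01}.
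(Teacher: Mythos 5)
Your proposal is correct and follows essentially the same route as the paper, which obtains the corollary by combining the identity $M_n(x,y,s,t)=(2y)^nA_n\left(\frac{x}{y},\frac{s}{y},\frac{t}{2}\right)$ from Theorem~\ref{mainthm01} with the $\gamma$-expansion of $A_n(x,p,q)$ in~\cite[Theorem~3.4]{Ma24}. Your explicit check that the exponent of $y$ collapses to $j$, so that the substituted basis elements become $(xy)^j(x+y)^{n-i-2j}$, is exactly the (routine) verification the paper leaves implicit.
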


Let $\mbn$ be the {\it hyperoctahedral group} of rank $n$.
Elements of $\mbn$ are permutations of $[n]\cup\{-1,-2,\ldots,-n\}$ with the property that $\sigma(-i)=-\sigma(i)$ for all $i\in [n]$.
Let $\sigma=\sigma(1)\sigma(2)\cdots \sigma(n)\in\mbn$.
An {\it excedance} (resp.~{\it fixed point},~{\it singleton}) of $\sigma$ is an index $i\in [n]$ such that $\sigma(|\sigma(i)|)>\sigma(i)$ (resp.~$\sigma(i)=i$,~$\sigma(i)=-i$).
Let $\exc(\sigma)$ (resp.~$\fix(\sigma)$,~$\operatorname{single}(\sigma)$) be the number of excedances (resp.~fixed points, singletons) of $\sigma$. The number of weak excedances of $\sigma$ is defined by $\we(\sigma)=\exc(\sigma)+\operatorname{single}(\sigma)$.
Consider the following {\it $(p,q)$-Eulerian polynomials of type $B$}
 $$B_n(x,p,q):=\sum_{\sigma\in \mbn}x^{\we(\sigma)}p^{\fix(\sigma)}q^{\cyc{\sigma}}.$$
In particular, $B_n(x,1,1)$ reduces to $B_n(x)$ and $B_n(x,0,1)$ reduces to the type $B$ derangement polynomial $d_n^B(x)$, 
see~\cite[Theorem~3.15]{Brenti94} and~\cite{Chen09,Chow09} for details.
The first three $d_n^B(x)$'s are given as follows (see~\cite[p.~4]{Chen09}): 
$d_1^B(x)=x,~d_2^B(x)=4x+x^2$ and $d_3^B(x)=8x+20x^2+x^3$.

According to~\cite[Theorem~5.2]{Ma24}, one has
\begin{equation}\label{Bnxpq01}
B_n(x,p,q)=2^nA_n\left(x,\frac{p+x}{2},q\right).
\end{equation}
Combining this with~\eqref{Mnxyst02}, we get another interpretation of $B_n(x,p,q)$.
\begin{corollary}\label{Bnxpq}
We have
$$B_n(x,p,q)=\sum_{M\in\mm_n}x^{\operatorname{elblock}(M)}(p+x)^{\operatorname{fixb}(M)}2^{\operatorname{trace}(M)-\operatorname{fixb}(M)}q^{\operatorname{trace}(M)}.$$
In particular, the type $B$ derangement polynomials $d_n^B(x)$ can be expressed as
$$d_n^B(x)=\sum_{M\in\mm_n}x^{\operatorname{elblock}(M)+\operatorname{fixb}(M)}2^{\operatorname{trace}(M)-\operatorname{fixb}(M)}.$$
\end{corollary}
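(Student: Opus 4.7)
The plan is to derive Corollary~\ref{Bnxpq} by a direct substitution that couples the known type $B$/type $A$ transfer identity~\eqref{Bnxpq01} with the matching interpretation~\eqref{Mnxyst02}. First I would rewrite~\eqref{Bnxpq01} as
\[
B_n(x,p,q)=2^nA_n\!\left(x,\tfrac{p+x}{2},q\right),
\]
so that the variable $p$ appearing in the type $A$ polynomial is specialized to $\tfrac{p+x}{2}$, while $x$ and $q$ are kept. The point is that the right-hand side of~\eqref{Mnxyst02} is already an enumerator of $\mm_n$ weighted by a monomial in the three type $A$ variables, so the whole task is to propagate the substitution through that monomial.

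Next I would apply~\eqref{Mnxyst02} with $p$ replaced by $(p+x)/2$, which gives
\[
2^nA_n\!\left(x,\tfrac{p+x}{2},q\right)
=\sum_{M\in\mm_n}x^{\operatorname{elblock}(M)}\left(\tfrac{p+x}{2}\right)^{\operatorname{fixb}(M)}(2q)^{\operatorname{trace}(M)}.
\]
Then I would separate the powers of $2$: distributing $2^{-\operatorname{fixb}(M)}$ from the bracketed factor and $2^{\operatorname{trace}(M)}$ from $(2q)^{\operatorname{trace}(M)}$ combines into $2^{\operatorname{trace}(M)-\operatorname{fixb}(M)}$, leaving $(p+x)^{\operatorname{fixb}(M)}q^{\operatorname{trace}(M)}$ as the remaining factor. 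This yields exactly the claimed expansion for $B_n(x,p,q)$. The balance $2^{\operatorname{trace}(M)-\operatorname{fixb}(M)}$ is well-defined because the opener of every fixed block is a trace index by the definition preceding Theorem~\ref{mainthm01}, so $\operatorname{trace}(M)\geqslant \operatorname{fixb}(M)$.

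For the derangement specialization, I would then set $p=0$ and $q=1$ in the just-proved identity, using the standard fact $d_n^B(x)=B_n(x,0,1)$ recorded in the text before~\eqref{Bnxpq01}. Under $p=0$ the factor $(p+x)^{\operatorname{fixb}(M)}$ becomes $x^{\operatorname{fixb}(M)}$ and combines with $x^{\operatorname{elblock}(M)}$ to give $x^{\operatorname{elblock}(M)+\operatorname{fixb}(M)}$, while $q^{\operatorname{trace}(M)}$ disappears; this produces the stated formula for $d_n^B(x)$.

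No serious obstacle is expected: the whole argument is a transparent two-step substitution. The only point that requires a moment of attention is the bookkeeping of the $2$-powers (making sure that $2^{-\operatorname{fixb}(M)}\cdot 2^{\operatorname{trace}(M)}=2^{\operatorname{trace}(M)-\operatorname{fixb}(M)}$ is indeed a nonnegative power of $2$), and as noted this is automatic from the definition of trace indices. The substance of the corollary therefore rests entirely on Theorem~\ref{mainthm01} and the transfer identity~\eqref{Bnxpq01}, which I am allowed to assume.
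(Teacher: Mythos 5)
Your proposal is correct and follows exactly the route the paper intends: substitute $p\mapsto(p+x)/2$ into the matching interpretation~\eqref{Mnxyst02}, combine the powers of $2$ via~\eqref{Bnxpq01}, and then specialize $p=0$, $q=1$ to get $d_n^B(x)$. Your additional remark that $\operatorname{trace}(M)\geqslant\operatorname{fixb}(M)$ (since the opener of each fixed block is a trace index) is a nice touch the paper leaves implicit, but otherwise the two arguments coincide.
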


Define $$M_n(x,q)=\sum_{M\in\mm_n}x^{\operatorname{elblock}(M)+\operatorname{fixb}(M)}q^{\operatorname{trace}(M)},~
\widetilde{M}_n(x,q)=\sum_{M\in\mm_n}x^{\operatorname{elblock}(M)}q^{\operatorname{trace}(M)}.$$
We end this section by giving a dual result of Corollary~\ref{convolution}.
\begin{proposition}
We have 
$B_n(x,1,q)=\sum_{k=0}^n\binom{n}{k}M_k(x,q)\widetilde{M}_{n-k}(x,q)$.
\end{proposition}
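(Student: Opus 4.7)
The plan is to prove the identity by exponential generating function (EGF) comparison. Since both sides are polynomials in $x$ and $q$ with coefficients in $\mathbb{Z}$, and the convolution on the right-hand side is precisely the coefficient of $z^n/n!$ in a product of two EGFs, it suffices to verify that
$$\left(\sum_{n\geqslant 0}M_n(x,q)\frac{z^n}{n!}\right)\left(\sum_{n\geqslant 0}\widetilde{M}_n(x,q)\frac{z^n}{n!}\right)=\sum_{n\geqslant 0}B_n(x,1,q)\frac{z^n}{n!}.$$

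First I would express both $M_n(x,q)$ and $\widetilde{M}_n(x,q)$ as specializations of $M_n(x,y,s,t)$: indeed, $M_n(x,q)=M_n(x,1,x,q)$ and $\widetilde{M}_n(x,q)=M_n(x,1,1,q)$. Plugging these specializations into~\eqref{MNEGF01} yields
$$\sum_{n\geqslant 0}M_n(x,q)\frac{z^n}{n!}=\left(\frac{(1-x)\mathrm{e}^{2xz}}{\mathrm{e}^{2xz}-x\mathrm{e}^{2z}}\right)^{q/2},\qquad \sum_{n\geqslant 0}\widetilde{M}_n(x,q)\frac{z^n}{n!}=\left(\frac{(1-x)\mathrm{e}^{2z}}{\mathrm{e}^{2xz}-x\mathrm{e}^{2z}}\right)^{q/2}.$$
Multiplying and collecting exponents simplifies the product to $\left(\frac{(1-x)\mathrm{e}^{(1+x)z}}{\mathrm{e}^{2xz}-x\mathrm{e}^{2z}}\right)^{q}$.

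Next I would compute the EGF of $B_n(x,1,q)$. By~\eqref{Bnxpq01}, $B_n(x,1,q)=2^nA_n\!\left(x,\tfrac{1+x}{2},q\right)$, so the generating function is obtained from~\eqref{Anxpq-EGF} by the substitutions $p=\tfrac{1+x}{2}$ and $z\mapsto 2z$:
$$\sum_{n\geqslant 0}B_n(x,1,q)\frac{z^n}{n!}=\sum_{n\geqslant 0}A_n\!\left(x,\tfrac{1+x}{2},q\right)\frac{(2z)^n}{n!}=\left(\frac{(1-x)\mathrm{e}^{(1+x)z}}{\mathrm{e}^{2xz}-x\mathrm{e}^{2z}}\right)^{q}.$$
This matches the product computed above, so extracting coefficients of $z^n/n!$ from both sides yields the claimed identity.

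The proof is essentially an algebraic check, so there is no serious obstacle; the only thing to watch is the bookkeeping of the factor of $2$ relating the $M_n$ EGF (with $\mathrm{e}^{2xz}$, $\mathrm{e}^{2z}$) to the $A_n$ EGF (with $\mathrm{e}^{xz}$, $\mathrm{e}^{z}$), and the square-root exponent $q/2$ on the matching side that doubles to $q$ after forming the product. It would be natural to conclude with a remark that, although a direct bijective proof splitting a signed permutation in $\mbn$ into an ordered pair of matchings recording the two statistics $(\operatorname{elblock}+\operatorname{fixb},\operatorname{trace})$ and $(\operatorname{elblock},\operatorname{trace})$ would be desirable, we leave such a combinatorial proof as an open problem.
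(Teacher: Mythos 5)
Your proof is correct and follows essentially the same route as the paper: both arguments reduce the convolution to the multiplicativity in the exponent of the closed-form EGF, the paper phrasing it as $A_n\bigl(x,\tfrac{1+x}{2},q\bigr)=\sum_k\binom{n}{k}A_k\bigl(x,x,\tfrac{q}{2}\bigr)A_{n-k}\bigl(x,1,\tfrac{q}{2}\bigr)$ and then translating via \eqref{Mnxyst02} and \eqref{Bnxpq01}, while you multiply the two specializations of \eqref{MNEGF01} directly. The bookkeeping of the factor $2$ and the exponent $q/2$ checks out, so no changes are needed.
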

\begin{proof}
By~\eqref{Anxpq-EGF},~\eqref{Mnxyst02} and~\eqref{MNEGF01}, we obtain
\begin{align*}
\sum_{n=0}^\infty M_n(x,q)\frac{z^n}{n!}&=\sum_{n=0}^\infty 2^nA_n\left(x,x,\frac{q}{2}\right)\frac{z^n}{n!}\\
&=\sum_{n=0}^\infty (2x)^nA_n\left(\frac{1}{x},1,\frac{q}{2}\right)\frac{z^n}{n!}\\
&=\sum_{n=0}^\infty x^n\widetilde{M}_n\left(\frac{1}{x},q\right)\frac{z^n}{n!},
\end{align*}
which yields that $M_n(x,q)=x^n\widetilde{M}_n\left(\frac{1}{x},q\right)$.
Using~\eqref{Anxpq-EGF}, we note that $$A_n\left(x,\frac{1+x}{2},q\right)=\sum_{k=0}^n\binom{n}{k}A_k\left(x,x,\frac{q}{2}\right)A_{n-k}\left(x,1,\frac{q}{2}\right).$$
Comparing the above formula with~\eqref{Bnxpq01}, we get the desired convolution formula.  
\end{proof}
\section{Matching permutations, neighbor polynomials and $e$-positivity}\label{section03}
\subsection{Definitions and preliminaries}
\hspace*{\parindent}

For any $M\in\mm_n$, we give a labeling of it as follows. Label the closers of the arcs from left to right with the barred elements 
$\overline{1},\overline{2},\ldots,\overline{n}$, 
and label the openers of the arcs with the corresponding number that they are connected with the unbarred elements $1,2,\ldots,n$.
When we read the labels of $M$ from left to right, 
we get a permutation on $[n]\cup [\overline{n}]=\{1,2,\ldots,n,\overline{1},\overline{2},\ldots,\overline{n}\}$. 
\begin{definition}
A permutation on $[n]\cup [\overline{n}]$ is called a {\it matching permutation} if all barred elements 
$\overline{1},\overline{2},\ldots,\overline{n}$ are arranged in increasing order from left to right, and 
for each $i$, the unbarred $i$ appears to the left of barred $\overline{i}$ (there may exist some elements between $i$ and $\overline{i}$).
\end{definition}

Let $\operatorname{MP}_n$ be the set of matching permutations of $[n]\cup [\overline{n}]$. Clearly, 
$\operatorname{MP}_1=\{1\overline{1}\}$ and $\operatorname{MP}_2=\{1\overline{1}2\overline{2},~12\overline{1}~\overline{2},~21\overline{1}~\overline{2}\}$.
For example, if $\sigma=1\overline{1}2\overline{2}$, we have $\sigma(1)=1,~\sigma(2)=\overline{1},~\sigma(3)=2$ and $\sigma(4)=\overline{2}$.
The correspondence between $\mm_2$ and $\operatorname{MP}_2$ can be represented by Figure~\ref{fig2}.
\begin{figure}[!ht]\label{fig2}
\renewcommand{\arraystretch}{2}
\begin{center}
\begin{tabular}{c|c|c}
\matchfour{1/2,3/4}{$1\overline{1}2\overline{2}$} &
\matchfour{1/3,2/4}{$12\overline{1}~\overline{2}$} &
\matchfour{1/4,2/3}{$21\overline{1}~\overline{2}$}
\end{tabular}.
\end{center}
\caption{The correspondence between $\mm_2$ and $\operatorname{MP}_2$.}\label{fig2}
\end{figure}

The statistics on matching permutations can be translated easily into the corresponding statistics on matchings, and vice versa.
Define
$$I_n(x,y,q):=\sum_{M\in\mm_n}x^{\operatorname{ne}(M)}y^{\operatorname{cr}(M)}q^{\operatorname{al}(M)}.$$
It is reasonable to expect that $I_n(x,y,q)$ enumerates matching permutations
according to some statistics. 
The number of {\it inversions}, {\it co-inversions} and {\it ranks} of $\sigma\in \operatorname{MP}_n$ are defined by
\begin{align*}
\inv(\sigma)&=\#\{(i,j):~i<j,~\sigma(i)>\sigma(j),~{\text{$\sigma(i)$ and $\sigma(j)$ are both unbarred entries}}\};\\
\operatorname{coinv}(\sigma)&=\#\{(i,j):~i<j,~\sigma(i)<\sigma(j),~{\text{$\sigma(i)$ and $\sigma(j)$ are both unbarred entries}}\};\\
\operatorname{rank}(\sigma)&=\#\{(i,j):~i<j,~\sigma(i)<\sigma(j),~{\text{$\sigma(i)$ is a barred entry and $\sigma(j)$ is an unbarred entry}}\}.       
\end{align*}
By adopting the interpretation of matchings as matching permutations, we get the following.
\begin{proposition}
We have 
$$I_n(x,y,q)=\sum_{\sigma\in \operatorname{MP}_n}x^{\operatorname{inv}(M)}y^{\operatorname{coinv}(M)}q^{\operatorname{rank}(M)}.$$
\end{proposition}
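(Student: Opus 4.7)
The plan is to prove this via the labeling bijection $\phi\colon \mm_n \to \operatorname{MP}_n$ introduced just before the proposition: for $M\in\mm_n$, label the $k$-th closer (read from left to right) by $\overline{k}$, label its partner opener by the unbarred $k$, and then read off the labels along the horizontal axis. First I would check that $\phi$ is a bijection. The two defining conditions of $\operatorname{MP}_n$ --- barred entries appearing in increasing order, and each unbarred $k$ preceding $\overline{k}$ --- are precisely what the labeling of any matching produces, and conversely any $\sigma\in\operatorname{MP}_n$ can be inverted to a matching by pairing each $k$ with the corresponding $\overline{k}$ into an arc.

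The heart of the proof is then an arc-pair case analysis. Fix any two arcs with labels $a < b$ (equivalently $j_a < j_b$). Exactly one of three configurations occurs, and the local pattern read off $\phi(M)$ at the four relevant positions is determined:
\begin{itemize}
\item \emph{Nesting} ($i_b < i_a < j_a < j_b$) yields the pattern $b,a,\overline{a},\overline{b}$, so the pair of unbarred labels has the larger one to the left --- precisely what $\operatorname{inv}$ records.
\item \emph{Crossing} ($i_a < i_b < j_a < j_b$) yields $a,b,\overline{a},\overline{b}$, so the pair of unbarred labels has the smaller one to the left --- recorded by $\operatorname{coinv}$.
\item \emph{Alignment} ($i_a < j_a < i_b < j_b$) yields $a,\overline{a},b,\overline{b}$, placing $\overline{a}$ before $b$ with $a<b$ --- recorded by $\operatorname{rank}$.
\end{itemize}
Since these three cases are mutually exclusive and exhaustive over the $\binom{n}{2}$ pairs of arcs, summing gives $\operatorname{ne}(M)=\operatorname{inv}(\phi(M))$, $\operatorname{cr}(M)=\operatorname{coinv}(\phi(M))$, and $\operatorname{al}(M)=\operatorname{rank}(\phi(M))$. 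Substituting into the weighted sums yields the proposition.

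The main obstacle is ensuring the clean separation between the three statistics on the matching-permutation side: the unbarred pair produced by an alignment must be kept out of the $\operatorname{coinv}$ count (so that alignments are recorded only by $\operatorname{rank}$), distinguishable from the crossing case by whether the closer $\overline{a}$ lies between the two opener positions --- inside for an alignment, after both for a crossing. A parallel check is needed to ensure that nesting and alignment do not double-count on the $\operatorname{rank}$ side, which follows since in a nesting the closer $\overline{a}$ appears after the opener of $b$ (not before). Once this local bookkeeping is pinned down, the three statistics match arc-pair by arc-pair and the identity is immediate.
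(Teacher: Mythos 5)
Your overall strategy --- the labeling bijection $\phi$ followed by a case analysis over pairs of arcs --- is the natural one, and it is essentially all that is available here, since the paper's own ``proof'' consists of the single sentence ``By adopting the interpretation of matchings as matching permutations, we get the following.'' Your identifications $\operatorname{ne}(M)=\operatorname{inv}(\phi(M))$ and $\operatorname{al}(M)=\operatorname{rank}(\phi(M))$ are correct, modulo making explicit the convention $\overline{a}<b \iff a<b$ for comparing a barred with an unbarred entry, which the paper never states but which $\operatorname{rank}$ requires.

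However, the step $\operatorname{cr}(M)=\operatorname{coinv}(\phi(M))$ does not close, and your ``main obstacle'' paragraph is pointing at a genuine hole rather than at bookkeeping to be pinned down. As defined, $\operatorname{coinv}(\sigma)$ counts \emph{all} pairs of positions carrying increasing unbarred entries. Your alignment pattern $a\,\overline{a}\,b\,\overline{b}$ contains such a pair ($a$ before $b$) exactly as the crossing pattern $a\,b\,\overline{a}\,\overline{b}$ does, so your case analysis in fact proves $\operatorname{coinv}(\sigma)=\operatorname{cr}(M)+\operatorname{al}(M)$, not $\operatorname{coinv}(\sigma)=\operatorname{cr}(M)$. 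Observing that the two configurations are ``distinguishable by whether $\overline{a}$ lies between the two openers'' does not repair this, because the definition of $\operatorname{coinv}$ never consults the position of $\overline{a}$, so no exclusion actually takes place. Concretely, for $n=2$ and $M=(1,2)(3,4)$ one has $\phi(M)=1\overline{1}2\overline{2}$ with $\operatorname{cr}(M)=0$ but $\operatorname{coinv}(\phi(M))=1$; summed over all of $\operatorname{MP}_n$, your argument yields $\sum_{\sigma}x^{\operatorname{inv}(\sigma)}y^{\operatorname{coinv}(\sigma)}q^{\operatorname{rank}(\sigma)}=I_n(x,y,qy)$ rather than $I_n(x,y,q)$. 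To complete the proof you must build the exclusion into the statement itself: either (i) strengthen $\operatorname{coinv}$ to count only those increasing unbarred pairs $(\sigma(i),\sigma(j))$ for which $\overline{\sigma(i)}$ occurs to the right of position $j$ (this is precisely the crossing condition, after which your pairwise analysis goes through verbatim), or (ii) prove the identity in the corrected form $I_n(x,y,q)=\sum_{\sigma\in\operatorname{MP}_n}x^{\operatorname{inv}(\sigma)}y^{\operatorname{coinv}(\sigma)-\operatorname{rank}(\sigma)}q^{\operatorname{rank}(\sigma)}$.
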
 

There has been an increasing interest in neighbor information in matchings. We list some of them as follows:
 \begin{itemize}
  \item [\rm $(i)$] Following~\cite{Claesson11}, Stoimenow's diagrams are matchings with no {\it neighbor nestings}, i.e., matchings with neither left-nestings nor 
right-nestings. The Fishburn numbers count Stoimenow's diagrams. Bousquet-M\'elou, Claesson, Dukes and Kitaev~\cite{BousquetKitaev10} constructed bijections between Stoimenow's diagrams and three other structures, including unlabeled $(2+2)$-free posets, ascent sequences and pattern avoiding permutations;
\item [\rm $(ii)$] Chen, Fan and Zhao~\cite{Chen12} found the generating functions for partial matchings avoiding neighbor patterns, including neighbor alignments,
left-nestings and right-nestings;
\item [\rm $(iii)$] Sokal and Zeng~\cite[Section~4.2]{Sokal22} studied crossings and nestings according to whether the element in second position is even or odd, i.e.,
a crossing or nesting $i<j<k<\ell$ is even (resp. odd) if $j$ is even (resp. odd).
  \end{itemize}

In the sequel we shall introduce a neighbor classification.
For $\sigma\in \operatorname{MP}_n$, we define the sets of left-nesting indices, left-crossing indices and near alignment indices by 
\begin{align*}
\operatorname{Lne}(\sigma)&=\{i:~\sigma(i)>\sigma(i+1),~{\text{both $\sigma(i)$ and $\sigma(i+1)$ are unbarred entries}}\};\\
\operatorname{Lcr}(\sigma)&=\{i:~\sigma(i)<\sigma(i+1),~{\text{both $\sigma(i)$ and $\sigma(i+1)$ are unbarred entries}}\};\\
\operatorname{Nal}(\sigma)&=\{i:~{\text{$\sigma(i)$ is a barred entry and $\sigma(i+1)$ is an unbarred entry}}\},
\end{align*}
and let $\operatorname{lne}(\sigma)=\#\operatorname{Lne}(\sigma)$, $\operatorname{lcr}(\sigma)=\#\operatorname{Lcr}(\sigma)$ and $\operatorname{nal}(\sigma)=\#\operatorname{Nal}(\sigma)$.
The sets of {\it RR pair} indices and {\it LR pair} indices of $\sigma$ are respectively defined by 
\begin{align*}
\operatorname{Rrp}(\sigma)&=\{i:~{\text{both $\sigma(i)$ and $\sigma(i+1)$ are barred entries}}\};\\
\operatorname{Lrp}(\sigma)&=\{i:~{\text{$\sigma(i)$ is an unbarred entry and $\sigma(i+1)$ is a barred entry}}\}.
\end{align*}
Set $\operatorname{rrp}(\sigma)=\#\operatorname{Rrp}(\sigma)$ and $\operatorname{lrp}(\sigma)=\#\operatorname{Lrp}(\sigma)$.
Clearly, every index $i\in [2n-1]$ belongs to exactly one of the above five subsets, and we call this 
the {\it neighbor classification}.
Obviously, $\operatorname{lne}(\sigma)$ (resp.~$\operatorname{lcr}(\sigma)$,~$\operatorname{nal}(\sigma)$) 
equals the number of left-nestings (resp.~left-crossings,~neighbor alignments) of the corresponding mathcing. 
Hence we use the same notation for the above five statistics over matching permutations and matchings. 
It should be noted that the definition of LR pair was introduced by Cameron and Killpatrick~\cite{Cameron19}.
Using a bijection between Dyck paths and nonnesting matchings, they~\cite[p.~3]{Cameron19} observed that
$$N(n,k)=\frac{1}{n}\binom{n}{k-1}\binom{n}{k}=\#\{M\in\mm_n: \operatorname{ne}(M)=0, \operatorname{lrp}(M)=k\}.$$

\begin{example}
If $\sigma=21\overline{1}34\overline{2}~\overline{3}~\overline{4}5\overline{5}$, then we have
$$\operatorname{Lne}(\sigma)=\{1\},~\operatorname{Lcr}(\sigma)=\{4\},~\operatorname{Nal}(\sigma)=\{3,8\},
~\operatorname{Rrp}(\sigma)=\{6,7\},~\operatorname{Lrp}(\sigma)=\{2,5,9\}.$$
\end{example}
\subsection{Main results}
\hspace*{\parindent}

The {\it neighbor polynomials} are defined by 
$$C_n(x_1,x_2,x_3,y_1,y_2):=\sum_{\sigma\in \operatorname{MP}_n}{x_1}^{\operatorname{lne}(\sigma)}
{x_2}^{\operatorname{lcr}(\sigma)}{x_3}^{\operatorname{nal}(\sigma)}{y_1}^{\operatorname{rrp}(\sigma)}{y_2}^{\operatorname{lrp}(\sigma)}.$$
In particular, $C_1(x_1,x_2,x_3,y_1,y_2)=y_2$ and $C_2(x_1,x_2,x_3,y_1,y_2)=(x_1+x_2)y_1y_2+x_3y_2^2$.
\begin{lemma}\label{lemma02}
If $G=\{I\rightarrow Ix_1y_1,~x_1\rightarrow x_1x_2y_1,~x_2\rightarrow x_1x_2y_1,~x_3\rightarrow x_1x_3y_1,~y_1\rightarrow x_3y_1y_2,~y_2\rightarrow x_2y_1y_2,~E\rightarrow Ex_3y_2\}$, then we have $D_G^n(Iy_2E)=IEC_{n+1}(x_1,x_2,x_3,y_1,y_2)$.
\end{lemma}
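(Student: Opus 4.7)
The plan is to prove this by induction on $n$ via a grammatical labeling of matching permutations, in the spirit of Lemma~\ref{Lemma-matchings}. To each $\sigma\in\operatorname{MP}_n$ I would assign the weight $IE\cdot x_1^{\operatorname{lne}(\sigma)}x_2^{\operatorname{lcr}(\sigma)}x_3^{\operatorname{nal}(\sigma)}y_1^{\operatorname{rrp}(\sigma)}y_2^{\operatorname{lrp}(\sigma)}$, viewing $I$ and $E$ as markers attached to the two ends of $\sigma$ and labeling each of the $2n-1$ interior gaps by one of $x_1,x_2,x_3,y_1,y_2$ according to the neighbor classification. Summing over $\operatorname{MP}_n$ gives $IEC_n$, and the base case $n=0$ is immediate since the unique element $1\bar{1}\in\operatorname{MP}_1$ has a single lrp-gap and weight $Iy_2E=IEC_1=D_G^0(Iy_2E)$.

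For the inductive step, the structural observation is that $\bar{n+2}$ must be the rightmost entry of every $\sigma'\in\operatorname{MP}_{n+2}$, so each such $\sigma'$ arises uniquely from a pair consisting of some $\sigma\in\operatorname{MP}_{n+1}$ together with one of the $2n+3$ positions on the left of the appended $\bar{n+2}$ into which to insert $n+2$. By the Leibniz rule, $D_G$ applied to the weight of a fixed $\sigma$ splits into exactly $2n+3$ terms, one for each letter in the weight expression (the two markers together with the $2n+1$ gap labels). I would then match these derivative terms with the insertion positions as follows: (i)~the substitution $I\to Ix_1y_1$ corresponds to inserting $n+2$ at the leftmost position, which creates a new lne-gap (because $\sigma(1)$ is necessarily unbarred and smaller than $n+2$) together with the new rrp-gap $(\bar{n+1},\bar{n+2})$; (ii)~the substitution $E\to Ex_3y_2$ corresponds to inserting $n+2$ immediately before $\bar{n+2}$, which creates a nal-gap and an lrp-gap and, crucially, no rrp-gap because $n+2$ now separates $\bar{n+1}$ from $\bar{n+2}$, explaining why this rule alone lacks the $y_1$ factor; (iii)~differentiating an interior gap label corresponds to inserting $n+2$ inside that gap, which splits it into two new gaps whose types are determined purely by the original type (since $n+2$ is unbarred and exceeds every unbarred entry of $\sigma$), via lne $\mapsto$ lcr+lne, lcr $\mapsto$ lcr+lne, nal $\mapsto$ nal+lne, rrp $\mapsto$ nal+lrp, and lrp $\mapsto$ lcr+lrp; combined with the additional rrp-gap arising from the appended $\bar{n+2}$, these match exactly the five rules $x_1\to x_1x_2y_1$, $x_2\to x_1x_2y_1$, $x_3\to x_1x_3y_1$, $y_1\to x_3y_1y_2$, $y_2\to x_2y_1y_2$ of $G$.

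The main obstacle is the case-by-case verification of the five interior gap-splitting rules. While each check is routine, care is required to track, for each original gap type, whether the two new gap types are determined by the comparison between $n+2$ and a neighbour or by the barred status of a neighbour; the boundary exception in case (ii), where no new rrp-gap is created, must also be handled separately from the generic rule. Once these verifications are complete, the $2n+3$ terms of $D_G(IEC_{n+1})$ account bijectively for the $2n+3$ insertion positions producing all of $\operatorname{MP}_{n+2}$, yielding $D_G(IEC_{n+1})=IEC_{n+2}$ and closing the induction.
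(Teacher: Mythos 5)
Your proposal is correct and follows essentially the same route as the paper: the identical grammatical labeling of the $2n-1$ neighbor gaps (plus the boundary markers $I$ and $E$), the same generation mechanism of appending the new barred element and inserting the new unbarred element into one of the available positions, and the same seven-case matching of insertion positions with the substitution rules of $G$, including the boundary exception for the $E\rightarrow Ex_3y_2$ rule. No gaps; the verification of the five interior gap-splitting rules you outline is exactly the case analysis ($c_2$)--($c_6$) carried out in the paper.
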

\begin{proof}
For $\sigma\in \operatorname{MP}_n$, we prepend a label $I$ and append a label $E$,
label a left nesting index by $x_1$, a left-crossing index by $x_2$, a near alignment index by $x_3$, a RR pair index by $y_1$ and a LR pair index by $y_2$.
For example, the matching permutation $21\overline{1}34\overline{2}~\overline{3}~\overline{4}5\overline{5}$ can be labeled as
$$^I2_{x_1}1_{y_2}\overline{1}_{x_3}3_{x_2}4_{y_2}\overline{2}_{y_1}~\overline{3}_{y_1}~\overline{4}_{x_3}5_{y_2}\overline{5}^E.$$
The labeled elements in $\operatorname{MP}_1$ and $\operatorname{MP}_2$ can be respectively listed as follows:
$$^I1_{y_2}\overline{1}^E,~^I1_{y_2}\overline{1}_{x_3}2_{y_2}\overline{2}^E,~^I2_{x_1}1_{y_2}\overline{1}_{y_1}~\overline{2}^E,
~^I1_{x_2}2_{y_2}\overline{1}_{y_1}~\overline{2}^E.$$ 
Note that $D_{G}(Iy_2E)=Iy_2E(x_1y_1+x_2y_1+x_3y_2)$. The weight of $\sigma$ is given by the product of its labels. Hence the result holds for $n=1$. 

Let us examine how to generate a matching permutation in $\operatorname{MP}_n$ by first
inserting $\overline{n}$ at the end of a $\sigma\in \operatorname{MP}_{n-1}$ and then inserting the entry $n$ just before $\sigma$ or right after any entry of $\sigma$.  
There are $2n-1$ positions to insert $n$, which can be divided into seven possibilities:
\begin{itemize}
  \item [\rm ($c_1$)] if we insert $n$ just before $\sigma$, then this operation corresponds to the substitution $I\rightarrow Ix_1y_1$,   
which can be illustrated as follows:
$$^I\sigma(1)\sigma(2)\cdots\sigma(2n-3)\overline{n-1}^E\rightarrow ^In_{x_1}\sigma(1)\sigma(2)\cdots\sigma(2n-3)\overline{n-1}_{y_1}~\overline{n}^E;$$
 \item [\rm ($c_2$)] if we insert $n$ right after an element with the left-nesting index, then 
this operation corresponds to the substitution $x_1\rightarrow x_1x_2y_1$, which can be illustrated as follows: 
$$^I\sigma(1)\cdots\sigma(i)_{x_1}\sigma_{i+1}\cdots\sigma(2n-3)\overline{n-1}^E\rightarrow ^I\sigma(1)\cdots\sigma(i)_{x_2}n_{x_1}\sigma_{i+1}\cdots\sigma(2n-3)\overline{n-1}_{y_1}\overline{n}^E;$$
 \item [\rm ($c_3$)] if we insert $n$ right after an element with the left-crossing index, then 
this operation corresponds to the substitution $x_2\rightarrow x_1x_2y_1$, which can be illustrated as follows: 
$$^I\sigma(1)\cdots\sigma(i)_{x_2}\sigma_{i+1}\cdots\sigma(2n-3)\overline{n-1}^E\rightarrow ^I\sigma(1)\cdots\sigma(i)_{x_2}n_{x_1}\sigma_{i+1}\cdots\sigma(2n-3)\overline{n-1}_{y_1}\overline{n}^E;$$
 \item [\rm ($c_4$)] if we insert $n$ right after an element with the near alignment index, then 
this operation corresponds to the substitution $x_3\rightarrow x_1x_3y_1$, which can be illustrated as follows: 
$$^I\sigma(1)\cdots\sigma(i)_{x_3}\sigma_{i+1}\cdots\sigma(2n-3)\overline{n-1}^E\rightarrow ^I\sigma(1)\cdots\sigma(i)_{x_3}n_{x_1}\sigma_{i+1}\cdots\sigma(2n-3)\overline{n-1}_{y_1}\overline{n}^E;$$
 \item [\rm ($c_5$)] if we insert $n$ right after an element with the RR pair index, then 
this operation corresponds to the substitution $y_1\rightarrow x_3y_1y_2$, which can be illustrated as follows: 
$$^I\sigma(1)\cdots\sigma(i)_{y_1}\sigma_{i+1}\cdots\sigma(2n-3)\overline{n-1}^E\rightarrow ^I\sigma(1)\cdots\sigma(i)_{x_3}n_{y_2}\sigma_{i+1}\cdots\sigma(2n-3)\overline{n-1}_{y_1}\overline{n}^E;$$
 \item [\rm ($c_6$)] if we insert $n$ right after an element with the LR pair index, then 
it corresponds to the substitution $y_2\rightarrow x_2y_1y_2$, which can be illustrated as follows: 
$$^I\sigma(1)\cdots\sigma(i)_{y_2}\sigma_{i+1}\cdots\sigma(2n-3)\overline{n-1}^E\rightarrow ^I\sigma(1)\cdots\sigma(i)_{x_2}n_{y_2}\sigma_{i+1}\cdots\sigma(2n-3)\overline{n-1}_{y_1}\overline{n}^E;$$
 \item [\rm ($c_7$)] if we append $n$ at the end of $\sigma$, then 
this operation corresponds to the substitution $E\rightarrow Ex_3y_2$, which can be illustrated as follows: 
$$^I\sigma(1)\cdots\sigma(2n-3)\overline{n-1}^E\rightarrow ^I\sigma(1)\cdots\sigma(2n-3)\overline{n-1}_{x_3}n_{y_2}\overline{n}^E.$$
\end{itemize}
Since each case corresponds to an application of a substitution rule in $G$, we see that the action of $D_{G}$
on the set of labeled matching permutations in $\operatorname{MP}_{n-1}$ gives the set of 
labeled matching permutations in $\operatorname{MP}_{n}$, and so $D_{G}^{n}(Iy_2E)$ equals the sum of weights 
of labeled matching permutations in $\operatorname{MP}_{n+1}$.
This completes the proof.
\end{proof}

A {\it rooted tree} of order $n$ with the vertices labelled $1,2,\ldots,n$, is an increasing tree if the
node labelled $1$ is distinguished as the root, and the labels along
any path from the root are increasing.
An {\it increasing plane tree} is an increasing tree in which the children of each vertex are linearly ordered (from left to right, say).
A 0-1-2-$\cdots$-k {\it increasing plane tree} on $[n]$ is an increasing plane tree with each
vertex with at most $k$ children, see Figure~\ref{Fig03-xy} for instance. The {\it degree} of a vertex in a rooted tree is
the number of its children.
\begin{figure}[ht!]
\begin{center}
\hspace*{\stretch{1}}
\begin{tikzpicture}
\Tree [.\node[label=left:{1}]{};
            \edge; [.\node[label=left:{2}]{};
                \edge; [.\node [label=left:{$3$}] {}; ]]]
        ]
\end{tikzpicture}\hspace*{\stretch{1}}
\begin{tikzpicture}
\Tree [.\node[label=left:{1}]{};
            \edge; [.\node[label=left:{2}]{};]
            \edge; [.\node [label=left:{$3$}] {}; ]]
        ]
\end{tikzpicture}\hspace*{\stretch{1}}
\begin{tikzpicture}
\Tree [.\node[label=left:{1}]{};
            \edge; [.\node[label=left:{3}]{};]
            \edge; [.\node [label=left:{$2$}] {}; ]]
        ]
\end{tikzpicture}.\hspace*{\stretch{1}}
\end{center}
\caption{The 0-1-2 increasing plane trees on $[3]$.}
\label{Fig03-xy}
\end{figure}

We can now present the main result of this section.
\begin{theorem}\label{mainthm2}
For any $n\geqslant 1$, we have the following results:
\begin{itemize}
  \item [\rm ($i$)] The neighbor polynomial has the following expansion with nonnegative coefficients:
\begin{equation}\label{cnxy}
C_{n+1}(x_1,x_2,x_3,y_1,y_2)=y_2\sum_{i+2j+3k=n}\xi_{n;i,j,k}w_1^iw_2^jw_3^k.
\end{equation}
where $w_1=x_1y_1+x_2y_1+x_3y_2,~w_2=x_1x_2y_1^2+x_1x_3y_1y_2+x_2x_3y_1y_2$ and $w_3=x_1x_2x_3y_1^2y_2$. 
Let $\xi_n:=\xi_n(x,y,z)=\sum_{i+2j+3k=n}\xi_{n;i,j,k}x^iy^jz^k$. Then we have the recursion
\begin{equation}\label{xi-recu}
\xi_{n+1}=x\xi_n+2y\frac{\partial}{\partial x}\xi_n+(xy+3z)\frac{\partial}{\partial y}\xi_n+2xz\frac{\partial}{\partial z}\xi_n,
\end{equation}
with the initial conditions $\xi_1(x,y,z)=x$ and $\xi_2(x,y,z)=x^2+2y$. 
  \item [\rm ($ii$)] 
The coefficient $\xi_{n;i,j,k}$ counts the number of 0-1-2-3 increasing plane trees on $[n+1]$ with 
$i$ degree one vertices, $j$ degree two vertices and $k$ degree three vertices. In particular, $\xi_{2;2,0,0}$ counts the leftmost tree in Figure~\ref{Fig03-xy}, 
and $\xi_{2;0,1,0}$ counts the other two trees.
\end{itemize}
\end{theorem}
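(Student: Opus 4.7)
The plan is to prove both parts using the grammar $G$ of Lemma~\ref{lemma02}. For part $(i)$, the central observation is that the three quantities $w_1, w_2, w_3$ form a system closed under the formal derivative $D_G$; for part $(ii)$, I will derive a recursion for the generating polynomial of $0$-$1$-$2$-$3$ increasing plane trees directly from a vertex-insertion argument and match it with \eqref{xi-recu} after using a weighted Euler identity.

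For part $(i)$, applying the product rule to the substitution rules of $G$ and collecting terms yields
\begin{equation*}
D_G(w_1) = 2w_2, \qquad D_G(w_2) = w_1 w_2 + 3 w_3, \qquad D_G(w_3) = 2 w_1 w_3,
\end{equation*}
and likewise $D_G(I y_2 E) = I y_2 E \cdot w_1$. An easy induction on $n$ then gives $D_G^n(I y_2 E) = I y_2 E \cdot \xi_n(w_1, w_2, w_3)$, where the $\xi_n$ are polynomials in $w_1, w_2, w_3$ with nonnegative integer coefficients satisfying $\xi_{n+1} = w_1 \xi_n + D_G(\xi_n)$ with $\xi_0 = 1$. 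Assigning the weights $1, 2, 3$ to $w_1, w_2, w_3$, each application of $D_G$ raises the total weight by one, so $\xi_n$ is weighted-homogeneous of weight $n$. Treating $w_1, w_2, w_3$ as the formal variables $x, y, z$ and expanding $D_G(\xi_n)$ by the chain rule using the three derivative formulas above converts the recursion into exactly \eqref{xi-recu}. Combined with Lemma~\ref{lemma02}, this gives the expansion~\eqref{cnxy}, and the stated initial conditions follow from $\xi_1 = w_1$ and $\xi_2 = w_1^2 + 2 w_2$, verified by direct calculation.

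For part $(ii)$, let $T_m(x,y,z)$ denote the generating polynomial of $0$-$1$-$2$-$3$ increasing plane trees on $[m]$, where $x, y, z$ track the numbers of vertices of degree $1, 2, 3$, respectively. Since the number of edges of such a tree equals $m - 1$ and also equals the sum of out-degrees, one has $d_1(T) + 2 d_2(T) + 3 d_3(T) = m - 1$, so $T_m$ is weighted-homogeneous of weight $m - 1$ and satisfies the weighted Euler identity $x \partial_x T_m + 2 y \partial_y T_m + 3 z \partial_z T_m = (m-1) T_m$. The classical construction obtains every increasing plane tree on $[m+1]$ uniquely from a tree on $[m]$ by attaching $m+1$ as a new child of an existing vertex $v$ with $\deg v < 3$, in one of $\deg v + 1$ available positions. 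Partitioning the sum according to $\deg v \in \{0,1,2\}$ and tracking the resulting changes in $(d_1,d_2,d_3)$ (using $d_0(T) = m - d_1(T) - d_2(T) - d_3(T)$ to express the leaf contribution) yields
\begin{equation*}
T_{m+1} = m x T_m - x^2 \partial_x T_m - x y \partial_y T_m - x z \partial_z T_m + 2 y \partial_x T_m + 3 z \partial_y T_m.
\end{equation*}
Eliminating $x^2 \partial_x T_m$ via the weighted Euler identity above transforms this into precisely \eqref{xi-recu}. Direct inspection of Figure~\ref{Fig03-xy} and its predecessor gives $T_2 = x = \xi_1$ and $T_3 = x^2 + 2y = \xi_2$, so $\xi_n = T_{n+1}$ for all $n \geqslant 1$, proving the claimed combinatorial interpretation of $\xi_{n;i,j,k}$.

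The main obstacle I anticipate is the verification that $\{w_1, w_2, w_3\}$ is closed under $D_G$, especially the identity $D_G(w_2) = w_1 w_2 + 3 w_3$, where numerous cross terms must be identified and combined correctly; once this closure is established, the remainder of the argument proceeds cleanly through standard grammar calculus together with the weighted Euler identity used in part $(ii)$.
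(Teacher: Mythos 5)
Your proposal is correct. For part $(i)$ you follow essentially the same route as the paper: the paper first passes from the grammar of Lemma~\ref{lemma02} to the intermediate monomials $b_1=x_1y_1$, $b_2=x_2y_1$, $b_3=x_3y_2$ (for which $D_G(b_i)=b_i\sum_{j\neq i}b_j$) and only then to their elementary symmetric functions, whereas you verify the closure $D_G(w_1)=2w_2$, $D_G(w_2)=w_1w_2+3w_3$, $D_G(w_3)=2w_1w_3$ directly; I checked these identities and they hold, the resulting grammar $\{a\rightarrow aw_1,\,w_1\rightarrow 2w_2,\,w_2\rightarrow w_1w_2+3w_3,\,w_3\rightarrow 2w_1w_3\}$ and the recursion \eqref{xi-recu} coincide with the paper's, and your weight-homogeneity remark correctly accounts for the constraint $i+2j+3k=n$. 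For part $(ii)$ you take a genuinely different and more self-contained route. The paper quotes the Chen--Fu recurrence for the numbers $\gamma_{n;i,j,k}$ of 0-1-2-3 increasing plane trees counted by leaves, degree-one and degree-two vertices, converts it to the differential recursion \eqref{gamma-recu}, and matches it with \eqref{xi-recu} via the substitution \eqref{xigamma}. You instead introduce the generating polynomial $T_m$ tracking degree-one, -two and -three vertices directly, derive its recursion from the standard construction of attaching $m+1$ as a new child in one of $\deg v+1$ positions, and eliminate the $x^2\partial_x T_m$ term using the edge-count identity $d_1+2d_2+3d_3=m-1$; this does reduce exactly to \eqref{xi-recu}, and the initial values $T_2=x$, $T_3=x^2+2y$ match $\xi_1,\xi_2$, so $\xi_n=T_{n+1}$ follows. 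Your version avoids both the external citation and the change of variables, at the cost of redoing the tree enumeration in the appropriate statistics; the paper's version is shorter on the page but leans on \eqref{Chen22}. Both arguments are valid.
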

\begin{proof}
\quad $(i)$
Let $G$ be the grammar given by Lemma~\ref{lemma02}. We consider a change of $G$.
Note that $$D_{G}(Iy_2E)=Iy_2E(x_1y_1+x_2y_1+x_3y_2).$$
Setting $a=Iy_2E$, $b_1=x_1y_1,~b_2=x_2y_1$ and $b_3=x_3y_2$, we see that
$$D_G(a)=a(b_1+b_2+b_3),~D_G(b_1)=b_1(b_2+b_3),~D_G(b_2)=b_2(b_1+b_3),~D_G(b_3)=b_3(b_1+b_2).$$
So we get a new grammar
$G_1=\{a\rightarrow a(b_1+b_2+b_3),~b_1\rightarrow b_1(b_2+b_3),~b_2\rightarrow b_2(b_1+b_3),~b_3\rightarrow b_3(b_1+b_2)\}$, which 
suggests that we may consider the expansion of $D_{G_1}^n(a)$ in terms of the elementary symmetric functions.
So we further set 
$w_1=b_1+b_2+b_3,~w_2=b_1b_2+b_1b_3+b_2b_3,~w_3=b_1b_2b_3$.
It is easy to verify that $G_1$ is transformed into the following grammar
$$G_2=\{a\rightarrow aw_1,~w_1\rightarrow 2w_2,~w_2\rightarrow w_1w_2+3w_3,~w_3\rightarrow 2w_1w_3\}.$$
Note that 
\begin{align*}
D_{G_2}(a)&=aw_1,~
D_{G_2}^2(a)=a(w_1^2+2w_2),~
D_{G_2}^3(a)=a(w_1^3+8w_1w_2+6w_3),\\
D_{G_2}^4(a)&=a(w_1^4+22w_1^2w_2+16w_2^2+42w_1w_3),\\
D_{G_2}^5(a)&=a(w_1^5+52w_1^3w_2+136w_1w_2^2+192w_1^2w_3+180w_2w_3),\\
D_{G_2}^6(a)&=a(w_1^6+114w_1^4w_2+720w_1^2w_2^2+272w_2^3+732w_1^3w_3+2304w_1w_2w_3+540w_3^2).
\end{align*}
Assume that 
\begin{equation}\label{DG2a}
D_{G_2}^n(a)=a\sum_{i+2j+3k=n}\xi_{n;i,j,k}w_1^iw_2^jw_3^k.
\end{equation}
We get 
\begin{align*}
D_{G_2}^{n+1}(a)&=D_{G_2}\left(a\sum_{i+2j+3k=n}\xi_{n;i,j,k}w_1^iw_2^jw_3^k\right)\\
&=a\sum_{i,j,k}\xi_{n;i,j,k}\left(w_1^{i+1}w_2^jw_3^k+2iw_1^{i-1}w_2^{j+1}w_3^k+(j+2k)w_1^{i+1}w_2^jw_3^k+3jw_1^iw_2^{j-1}w_3^{k+1}\right),
\end{align*}
which yields that the coefficients $\xi_{n,i,j,k}$ satisfy the recurrence relation
\begin{equation}\label{xi-recu2}
\xi_{n+1;i,j,k}=(1+j+2k)\xi_{n;i-1,j,k}+2(1+i)\xi_{n;i+1,j-1,k}+3(1+j)\xi_{n;i,j+1,k-1},
\end{equation}
with the initial conditions $\xi_{1;1,0,0}=1$ and $\xi_{1;i,j,k}=0$ for all $(i,j,k)\neq (1,0,0)$. 
Multiplying both sides of~\eqref{xi-recu2} by $x^iy^jz^k$ and summing over all $i,j,k$, we get~\eqref{xi-recu}.
Upon replacing $a=y_2$, $w_1=b_1+b_2+b_3=x_1y_1+x_2y_1+x_3y_2$, $w_2=b_1b_2+b_1b_3+b_2b_3=x_1x_2y_1^2+x_1x_3y_1y_2+x_2x_3y_1y_2$
and $w_3=b_1b_2b_3=x_1x_2x_3y_1^2y_2$ in~\eqref{DG2a}, we immediately get~\eqref{cnxy}. 

\quad $(ii)$ Let 
\begin{equation}\label{gammanijk}
\gamma_n(x,y,z):=\sum_{i+2j+3k=2n+1}\gamma_{n;i,j,k}x^iy^jz^k,
\end{equation} where 
$\gamma_{n;i,j,k}$ is the number of 
0-1-2-3 increasing plane trees on $[n]$ with $k$ leaves, $j$ degree one vertices and $i$ degree two vertices.
Recently, Chen-Fu~\cite[eq.~(4.9)]{Chen22} found that 
$$\gamma_{n;i,j,k}=3(1+i)\gamma_{n-1;i+1,j,k-1}+2(1+j)\gamma_{n-1;i-1,j+1,k-1}+k\gamma_{n-1;i,j-1,k},$$
with the initial conditions $\gamma_{1;0,0,1}=1$ and $\gamma_{n;i,j,k}=0$ if $k\neq 1$. 
Multiplying both sides of this recurrence relation by $x^iy^jz^k$ and summing over all $i,j,k$, we get
\begin{equation}\label{gamma-recu}
\gamma_{n+1}(x,y,z)=3z\frac{\partial}{\partial x}\gamma_n(x,y,z)+2xz\frac{\partial}{\partial y}\gamma_n(x,y,z)+yz\frac{\partial}{\partial z}\gamma_n(x,y,z),
\end{equation}
with the initial conditions $\gamma_1(x,y,z)=z$, $\gamma_2(x,y,z)=yz$ and $\gamma_3(x,y,z)=y^2 z + 2 x z^2$. 
Comparing~\eqref{xi-recu} with~\eqref{gamma-recu}, it is routine to verify that 
\begin{equation}\label{xigamma}
\xi_n(x,y,z)=z^{n+1}\gamma_{n+1}\left(\frac{y}{z},\frac{x}{z},\frac{1}{z}\right).
\end{equation}
Thus we get $\xi_{n;i,j,k}=\gamma_{n+1;j,i,n+1-i-j-k}$, as desired. This completes the proof.
\end{proof}

We define the {\it $\operatorname{NCA}$-polynomials} and {\it $\operatorname{NCR}$-polynomials} as follows:
\begin{align*}
&\operatorname{NCA}_n(x,y,z):=\sum_{\sigma\in \operatorname{MP}_n}{x}^{\operatorname{lne}(\sigma)}
{y}^{\operatorname{lcr}(\sigma)}{z}^{\operatorname{nal}(\sigma)},\\
&\operatorname{NCR}_n(x,y,z):=\sum_{\sigma\in \operatorname{MP}_n}{x}^{\operatorname{lne}(\sigma)}
{y}^{\operatorname{lcr}(\sigma)}{z}^{\operatorname{lrp}(\sigma)-1},
\end{align*}
where $\operatorname{NCA}$ is an abbreviation of left-nesting, left-crossing and neighbor alignment, and
$\operatorname{NCR}$ is an abbreviation of left-nesting, left-crossing and LR pair. Specialization of parameters in~\eqref{cnxy} immediately gives the following result.
\begin{corollary}\label{cor19}
For $n\geqslant 1$, we have $\operatorname{NCA}_{n}(x,y,z)=\operatorname{NCR}_{n}(x,y,z)$ and they are $e$-positive, i.e.,
$\operatorname{NCA}_{n+1}(x,y,z)=\sum_{i+2j+3k=n}\xi_{n;i,j,k}(x+y+z)^i(xy+xz+yz)^j(xyz)^k$.
\end{corollary}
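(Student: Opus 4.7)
The plan is to deduce Corollary~\ref{cor19} directly from Theorem~\ref{mainthm2}$(i)$ by two different specializations of the five-variable neighbor polynomial $C_{n+1}(x_1,x_2,x_3,y_1,y_2)$. The key observation is that the elementary symmetric expressions
$$w_1=x_1y_1+x_2y_1+x_3y_2,\quad w_2=x_1x_2y_1^2+x_1x_3y_1y_2+x_2x_3y_1y_2,\quad w_3=x_1x_2x_3y_1^2y_2,$$
both collapse to the \emph{symmetric} elementary symmetric functions in three indeterminates after each of the two specializations we need.

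For the $\operatorname{NCA}$-polynomial, I would set $(x_1,x_2,x_3,y_1,y_2)=(x,y,z,1,1)$ in the definition of $C_{n+1}$. By the definitions, this yields $C_{n+1}(x,y,z,1,1)=\operatorname{NCA}_{n+1}(x,y,z)$. Plugging this specialization into~\eqref{cnxy}, one checks immediately that $w_1=x+y+z$, $w_2=xy+xz+yz$, $w_3=xyz$, and the prefactor $y_2$ becomes $1$. This gives precisely the claimed $e$-positive expansion.

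For the $\operatorname{NCR}$-polynomial, I would instead substitute $(x_1,x_2,x_3,y_1,y_2)=(x,y,1,1,z)$, which gives $C_{n+1}(x,y,1,1,z)=\sum_{\sigma\in\operatorname{MP}_{n+1}}x^{\operatorname{lne}(\sigma)}y^{\operatorname{lcr}(\sigma)}z^{\operatorname{lrp}(\sigma)}=z\cdot\operatorname{NCR}_{n+1}(x,y,z)$, since $\operatorname{lrp}(\sigma)\geqslant 1$ for every matching permutation (the last index always contributes an LR-pair). Remarkably, under this substitution one computes $w_1=x+y+z$, $w_2=xy+xz+yz$, $w_3=xyz$ \emph{again}, while the outer $y_2$-factor now equals $z$. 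Dividing both sides of~\eqref{cnxy} by $z$ then produces the same right-hand side as in the $\operatorname{NCA}$-case.

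Comparing the two resulting formulas gives simultaneously $\operatorname{NCA}_{n+1}(x,y,z)=\operatorname{NCR}_{n+1}(x,y,z)$ and the stated $e$-positive expansion. There is essentially no obstacle here beyond verifying the two simple substitutions; the only point that requires a moment's thought is the fact that $\operatorname{lrp}(\sigma)\geqslant 1$, which is needed to justify factoring out $z$ cleanly in the $\operatorname{NCR}$-specialization, and this is transparent from the matching-permutation picture (the barred entry $\overline{n+1}$ is always preceded by some unbarred entry).
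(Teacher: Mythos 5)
Your proposal is correct and is exactly the paper's argument: the paper's entire proof of Corollary~\ref{cor19} is the single sentence ``Specialization of parameters in~\eqref{cnxy} immediately gives the following result,'' and the two substitutions $(x_1,x_2,x_3,y_1,y_2)=(x,y,z,1,1)$ and $(x,y,1,1,z)$ you carry out are precisely the specializations intended, with the verification that $w_1,w_2,w_3$ collapse to $x+y+z$, $xy+xz+yz$, $xyz$ in both cases being the whole content. The only microscopic imprecision is your justification of $\operatorname{lrp}(\sigma)\geqslant 1$ (the last barred entry need not be \emph{immediately} preceded by an unbarred one); the clean reason is that $\sigma(1)$ is always unbarred and $\sigma(2n)$ is always barred, so an unbarred-to-barred transition must occur.
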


Recall that the {\it bivariate Eulerian polynomials} are defined by $$A_n(x,y)=\sum_{\pi\in\msn}x^{\asc(\pi)}y^{\des(\pi)}.$$
In particular, $A_1(x,y)=1$ and $A_2(x,y)=x+y$.
An index $i\in[n]$ is called a {\it double descent} of $\pi$ if $\pi(i-1)>\pi(i)>\pi(i+1)$, where $\pi(0)=\pi(n+1)=0$.
Foata and Sch\"utzenberger~\cite{Foata70} found that $A_n(x,y)$ is $\gamma$-positive, which has attracted much attention in recent years (see~\cite{Branden08,Zeng12,Yan26}).
\begin{proposition}[\cite{Branden08,Foata70}]\label{Foata70}
One has
$A_n(x,y)=\sum_{i\geqslant 0}\alpha_{n,i}(xy)^i(x+y)^{n-1-2i}$,
where $\alpha_{n,i}$ is the number of permutations $\pi\in \msn$ having no double descents and $\des(\pi)=i$.
\end{proposition}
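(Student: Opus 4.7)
The plan is to prove this classical $\gamma$-positivity result using the modified Foata--Strehl (MFS) valley-hopping action on $\msn$. The action will be a $(\mathbb{Z}/2)^{n}$-action whose orbits partition $\msn$; each orbit has a unique canonical representative $\hat\pi$ with no double descents, and the sum of $x^{\asc(\pi)}y^{\des(\pi)}$ over an orbit factors as $(xy)^{i}(x+y)^{n-1-2i}$ with $i = \des(\hat\pi)$.

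First, for each value $x \in [n]$, I would define an involution $\varphi_x \colon \msn \to \msn$ as follows. Given $\pi \in \msn$, write $\pi = u \cdot x \cdot v$ and decompose $u = u'a$, $v = bv'$, where $a$ is the longest suffix of $u$ consisting of letters smaller than $x$ and $b$ is the longest prefix of $v$ consisting of letters smaller than $x$. Set $\varphi_x(\pi) = \pi$ when $a$ and $b$ are both empty (valley) or both non-empty (peak), and $\varphi_x(\pi) = u' \cdot b \cdot x \cdot a \cdot v'$ otherwise (swap the fragment from one side of $x$ to the other). One checks that $\varphi_x^2 = \mathrm{id}$, that $\varphi_x\varphi_y = \varphi_y\varphi_x$ for distinct $x, y$, and that each $\varphi_x$ preserves the positions of all letters greater than $x$. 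So the $\varphi_x$ generate a $(\mathbb{Z}/2)^n$-action on $\msn$.

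Next, I would identify the canonical representative $\hat\pi$ of each orbit as the permutation in which every non-peak, non-valley value has been made into a double ascent; equivalently, $\hat\pi$ has no double descents in the padded sense. To match the paper's convention $\pi(0) = \pi(n+1) = 0$, apply the reverse--complement involution $\pi \mapsto \pi^{rc}$ with $\pi^{rc}(i) = n+1-\pi(n+1-i)$, which preserves $(\asc,\des)$ and carries the MFS canonical representatives bijectively onto the permutations satisfying the paper's no-double-descent condition.

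Finally, I would verify that a single toggle $\varphi_x$ changes the pair $(\asc(\pi), \des(\pi))$ by exactly $(\mp 1, \pm 1)$. Let $k = n - 1 - 2\des(\hat\pi)$; one checks that $k$ is the number of toggleable values in $\hat\pi$, as a consequence of the identity $\#\mathrm{valleys} = \#\mathrm{peaks} + 1$ in a no-double-descent permutation. The orbit then has size $2^{k}$, and the orbit generating function evaluates to
\begin{equation*}
\sum_{\pi \in \mathrm{orbit}(\hat\pi)} x^{\asc(\pi)} y^{\des(\pi)} = x^{\asc(\hat\pi)}y^{\des(\hat\pi)}\left(1+\frac{y}{x}\right)^{k} = (xy)^{\des(\hat\pi)}(x+y)^{n-1-2\des(\hat\pi)},
\end{equation*}
using $\asc(\hat\pi) + \des(\hat\pi) = n-1$. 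Summing over orbits and grouping by $\des(\hat\pi) = i$ yields the claim, with $\alpha_{n,i}$ equal to the number of canonical representatives having $i$ descents. The main obstacle is the precise verification of the $(\mp 1, \pm 1)$ change-of-statistic under each toggle: one has to track what happens to the two padded comparisons straddling the toggled value, in particular handling the boundary subcases where the toggled value lies at position $1$ or $n$ and where one of the two padded comparisons does not correspond to an internal ascent or descent.
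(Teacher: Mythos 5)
The paper offers no proof of this proposition — it is quoted as a classical result, and the proof in the cited reference \cite{Branden08} is precisely the modified Foata--Strehl valley-hopping action you describe — so your approach coincides with that of the cited source, and it is correct. The one genuinely delicate point, namely that your $\varphi_x$ implicitly pads with $\pi(0)=\pi(n+1)=+\infty$ (so the orbit representatives avoid double descents in the $\infty$-padded sense, e.g.\ $\{123,132,231\}$ for $n=3$, rather than in the paper's $0$-padded sense, e.g.\ $\{123,213,312\}$), is correctly recognized and repaired by your reverse--complement step, which does preserve $(\asc,\des)$ and carries one no-double-descent class bijectively onto the other.
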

It is well known that the coefficients $\alpha_{n,i}$ also counts 0-1-2 increasing plane trees on $[n]$ with 
$i$ degree two vertices, see~\cite{Chen23} and~\cite[A101280]{Sloane} for instance. Clearly, a 0-1-2-3 increasing plane tree reduces to a
0-1-2 increasing plane tree if there are no degree three vertices, see Figure~\ref{Fig03-xy} for instance.
It follows from Theorem~\ref{mainthm2} that
$$C_{n}(x,y,0,1,1)=C_{n}(x,0,y,1,1)=C_{n}(0,x,y,1,1)=A_n(x,y).$$ 
Combining this with Corollary~\ref{cor19}, we get the following result.
\begin{corollary}\label{Anxy-Mn}
We have 
\begin{equation*}
A_n(x,y)=\sum_{\substack{\sigma\in \operatorname{MP}_n\\\operatorname{nal}(\sigma)=0}}{x}^{\operatorname{lne}(\sigma)}
{y}^{\operatorname{lcr}(\sigma)}=\sum_{\substack{\sigma\in \operatorname{MP}_n\\\operatorname{lcr}(\sigma)=0}}{x}^{\operatorname{lne}(\sigma)}
{y}^{\operatorname{nal}(\sigma)}=\sum_{\substack{\sigma\in \operatorname{MP}_n\\\operatorname{lne}(\sigma)=0}}{x}^{\operatorname{lcr}(\sigma)}
{y}^{\operatorname{nal}(\sigma)},
\end{equation*}
\begin{equation*}
A_n(x,y)=\sum_{\substack{\sigma\in \operatorname{MP}_n\\\operatorname{lne}(\sigma)=0}}{x}^{\operatorname{lcr}(\sigma)}
{y}^{\operatorname{lrp}(\sigma)-1}=\sum_{\substack{\sigma\in \operatorname{MP}_n\\\operatorname{lcr}(\sigma)=0}}{x}^{\operatorname{lne}(\sigma)}
{y}^{\operatorname{lrp}(\sigma)-1}=\sum_{\substack{\sigma\in \operatorname{MP}_n\\\operatorname{lrp}(\sigma)=1}}{x}^{\operatorname{lne}(\sigma)}
{y}^{\operatorname{lcr}(\sigma)}.
\end{equation*}
\end{corollary}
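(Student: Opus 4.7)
The plan is to derive the six identities in two stages. The first three, which involve only the statistics $\operatorname{lne}$, $\operatorname{lcr}$, $\operatorname{nal}$, will follow directly from the $e$-positive expansion of the neighbor polynomial in Theorem~\ref{mainthm2}(i), combined with the tree interpretation of the coefficients $\xi_{n;i,j,k}$ in Theorem~\ref{mainthm2}(ii) and the Foata--Sch\"utzenberger $\gamma$-expansion of the bivariate Eulerian polynomial (Proposition~\ref{Foata70}). The remaining three identities, which involve $\operatorname{lrp}$, will then be obtained by pairing the first three with the equidistribution $\operatorname{NCA}_n=\operatorname{NCR}_n$ of Corollary~\ref{cor19}.

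For stage one, the key observation is that once we set $y_1=y_2=1$, the quantities $w_1,w_2,w_3$ appearing in~\eqref{cnxy} are exactly the elementary symmetric polynomials in $x_1,x_2,x_3$. Setting $x_3=0$ collapses them to $w_1=x_1+x_2$, $w_2=x_1x_2$, $w_3=0$, so only terms with $k=0$ survive and
\begin{equation*}
C_{n+1}(x_1,x_2,0,1,1)=\sum_{i+2j=n}\xi_{n;i,j,0}(x_1+x_2)^i(x_1x_2)^j.
\end{equation*}
By Theorem~\ref{mainthm2}(ii), $\xi_{n;i,j,0}$ counts 0-1-2 increasing plane trees on $[n+1]$ with $i$ degree-one vertices and $j$ degree-two vertices, which is exactly Foata--Sch\"utzenberger's $\alpha_{n+1,j}$ from Proposition~\ref{Foata70}; hence the right-hand side equals $A_{n+1}(x_1,x_2)$. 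Reading off $C_{n+1}(x_1,x_2,0,1,1)=\sum_{\operatorname{nal}(\sigma)=0}x_1^{\operatorname{lne}(\sigma)}x_2^{\operatorname{lcr}(\sigma)}$ from the definition yields the first identity. By the symmetry of $(w_1,w_2,w_3)$ in $x_1,x_2,x_3$, the same argument with $x_2=0$ and then $x_1=0$ gives the second and third identities.

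For stage two, I would specialize Corollary~\ref{cor19}, namely $\operatorname{NCA}_n(x,y,z)=\operatorname{NCR}_n(x,y,z)$, at one of the three variables equal to $0$, and pair each specialization with the appropriate identity already established. Setting $z=0$ gives $\sum_{\operatorname{nal}(\sigma)=0}x^{\operatorname{lne}(\sigma)}y^{\operatorname{lcr}(\sigma)}=\sum_{\operatorname{lrp}(\sigma)=1}x^{\operatorname{lne}(\sigma)}y^{\operatorname{lcr}(\sigma)}$, which combined with the first identity produces the last one. Setting $y=0$ together with the second identity yields the fifth; setting $x=0$ together with the third identity yields the fourth.

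The only step that carries any real content is the identification of $\xi_{n;i,j,0}$ with Foata--Sch\"utzenberger's $\alpha_{n+1,j}$, but this is immediate: a 0-1-2-3 increasing plane tree with no degree-three vertex is simply a 0-1-2 increasing plane tree, and the tree interpretation of the $\gamma$-coefficients of $A_n(x,y)$ is classical (as recalled in the discussion preceding the corollary). Everything else is an assembly of the already-proved $e$-positivity of $C_{n+1}$ and the NCA/NCR equidistribution, so no separate combinatorial argument on matching permutations is needed.
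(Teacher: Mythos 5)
Your proposal is correct and follows essentially the same route as the paper: the paper likewise obtains $C_n(x,y,0,1,1)=C_n(x,0,y,1,1)=C_n(0,x,y,1,1)=A_n(x,y)$ from the $e$-positive expansion of Theorem~\ref{mainthm2} together with the identification of the coefficients $\xi_{n;i,j,0}$ with the tree-counting $\gamma$-coefficients $\alpha_{n+1,j}$ of Proposition~\ref{Foata70}, and then invokes the equidistribution $\operatorname{NCA}_n=\operatorname{NCR}_n$ of Corollary~\ref{cor19} for the three $\operatorname{lrp}$ identities. Your write-up simply makes explicit the specializations that the paper leaves as a one-line remark.
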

\subsection{Relationship to trivariate second-order Eulerian polynomials}
\hspace*{\parindent}

According to~\cite{Buckholtz,Carlitz65}, the {\it second-order Eulerian polynomials} can be defined by
$$\left(\frac{x}{1-x}\frac{\mathrm{d}}{\mathrm{d}x}\right)^n\frac{x}{1-x}=\frac{Q_n(x)}{(1-x)^{2r+1}}.$$
Let $[n]_2$ denote the multiset $\{1,1,2,2,\ldots,n,n\}$, where each element in $[n]$ appears twice. 
A {\it Stirling permutation} of order $n$ is a permutation on $[n]_2$ such that 
for each element $i\in[n]$, the values between the two copies of $i$ are larger 
than $i$. Let $\mq_n$ be the set of Stirling permutations of order $n$. 
It is clear that $\mq_1=\{11\}$ and $\mq_2=\{1122,1221,2211\}$. 
As usual, for $\tau=\tau_1\tau_2\cdots \tau_{2n}\in\mqn$, we always set $\tau_0=\tau_{2n+1}=0$. Let 
\begin{align*}
\operatorname{asc}(\tau):=\#\{i:~\tau_i<\tau_{i+1}\},~
\operatorname{plat}(\tau):=\#\{i:~\tau_i=\tau_{i+1}\},~
\operatorname{des}(\tau):=\#\{i:~\tau_i>\tau_{i+1}\}
\end{align*}
denote the number of ascents, plateaux and descents of $\tau$, respectively.
Gessel and Stanley~\cite{Gessel78} discovered that $Q_n(x)=\sum_{\tau\in\mqn}x^{\des(\tau)}$, 
which has various interesting generalizations and variations, see~\cite{Bona08,Haglund12,Hwang20,Ma26,Ma2602,Yan26} for the recent progress on this topic.

Consider the {\it trivariate second-order Eulerian polynomials}
$$Q_n(x,y,z):=\sum_{\tau\in\mqn}x^{\asc(\tau)}y^{\plat(\tau)}z^{\des(\tau)}.$$
Dumont~\cite[p.~317]{Dumont80} found that
\begin{equation}\label{Dumont80}
Q_{n+1}(x,y,z)=xyz\left(\frac{\partial}{\partial x}+\frac{\partial}{\partial y}+\frac{\partial}{\partial z}\right)Q_n(x,y,z),~Q_1(x,y,z)=xyz.
\end{equation}
which implies that $Q_n(x,y,z)$ is symmetric in the variables $x,y$ and $z$. 
The symmetry of $Q_n(x,y,z)$ was rediscovered by Janson~\cite[Theorem~2.1]{Janson08} by constructing an urn model.
Haglund and Visontai~\cite[Section~3.1]{Haglund12} found that $Q_n(x,y,z)$ are stable, which imply that their univariate counter
parts obtained by diagonalization have only real roots.
It should be noted that an equivalent result of~\eqref{Dumont80} is given as follows.
\begin{lemma}[\cite{Chen2102,Ma19}]\label{Lemma-Chen}
If $G=\{x \rightarrow xyz, y\rightarrow xyz, z\rightarrow xyz\}$, then $D_G^n(x)=Q_n(x,y,z)$.
\end{lemma}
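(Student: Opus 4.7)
The plan is to prove this by induction on $n$, showing that $D_G^n(x)$ satisfies the same recurrence and initial condition as $Q_n(x,y,z)$ stated in~\eqref{Dumont80}. Since $D_G(x) = xyz$, the base case $n=1$ gives $D_G(x) = xyz = Q_1(x,y,z)$, matching directly.

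The key step is to show that for any polynomial $P = P(x,y,z)$, the grammatical derivation $D_G$ acts as the differential operator $xyz \bigl(\tfrac{\partial}{\partial x} + \tfrac{\partial}{\partial y} + \tfrac{\partial}{\partial z}\bigr)$. Because $D_G$ is a derivation satisfying the Leibniz rule and extends linearly, it is determined by its action on the generators $x,y,z$. By the chain rule for formal derivations together with the grammar rules $D_G(x) = D_G(y) = D_G(z) = xyz$, one obtains
\begin{equation*}
D_G(P) = \frac{\partial P}{\partial x} D_G(x) + \frac{\partial P}{\partial y} D_G(y) + \frac{\partial P}{\partial z} D_G(z) = xyz\left(\frac{\partial}{\partial x} + \frac{\partial}{\partial y} + \frac{\partial}{\partial z}\right) P.
\end{equation*}

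Applying this identity with $P = D_G^n(x)$ and invoking the inductive hypothesis $D_G^n(x) = Q_n(x,y,z)$ then yields
\begin{equation*}
D_G^{n+1}(x) = xyz\left(\frac{\partial}{\partial x} + \frac{\partial}{\partial y} + \frac{\partial}{\partial z}\right) Q_n(x,y,z) = Q_{n+1}(x,y,z),
\end{equation*}
where the last equality is exactly Dumont's recurrence~\eqref{Dumont80}. This closes the induction.

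There is no genuine obstacle here; the only thing to be careful about is justifying the chain-rule identity $D_G(P) = xyz(\partial_x + \partial_y + \partial_z)P$ rigorously on monomials and then extending linearly, but this is a routine consequence of the Leibniz rule applied to a derivation sending each of $x,y,z$ to the common value $xyz$.
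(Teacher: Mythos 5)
Your proof is correct, and it is essentially the justification the paper has in mind: the paper gives no proof of this lemma (it is cited from the literature and introduced as ``an equivalent result of~\eqref{Dumont80}''), and your chain-rule observation that $D_G$ acts on any polynomial as $xyz\bigl(\tfrac{\partial}{\partial x}+\tfrac{\partial}{\partial y}+\tfrac{\partial}{\partial z}\bigr)$ is precisely what makes that equivalence rigorous. Combined with the base case $D_G(x)=xyz=Q_1(x,y,z)$ and Dumont's recurrence, the induction closes with no gaps.
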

For the grammar given in Lemma~\ref{Lemma-Chen}, Chen-Fu~\cite{Chen22} considered the following change of it: 
\begin{equation*}\label{change-grammars02}
\left\{
  \begin{array}{ll}
    u=x+y+z, &  \\
    v=xy+yz+zx, &\\
    w=xyz.
  \end{array}
\right.
\end{equation*}
By Lemma~\ref{Lemma-Chen}, it is routine to verify that
$D_{G}(u)=3w,~D_{{G}}(v)=2uw,~D_{G}(w)=vw$, which yield the grammar
$H=\{u\rightarrow 3w, v\rightarrow 2uw,~w\rightarrow vw\}$.
For $n\geqslant 1$, Chen-Fu~\cite{Chen22} discovered that
\begin{equation*}
Q_n(x,y,z)=D_G^n(x)=D_H^{n-1}(w)=\sum_{i+2j+3k=2n+1}\gamma_{n;i,j,k}u^iv^jw^k,
\end{equation*}
where $\gamma_{n;i,j,k}$ is the same as in~\eqref{gammanijk}.
So we have
\begin{equation}\label{Chen22}
Q_n(x,y,z)=\sum_{i+2j+3k=2n+1}\gamma_{n;i,j,k}(x+y+z)^i(xy+yz+zx)^j(xyz)^k.
\end{equation}
Combining~\eqref{cnxy},~\eqref{xigamma} and~\eqref{Chen22}, we see that 
\begin{align*}
C_{n+1}(x_1,x_2,x_3,y_1,y_2)&=y_2\xi_n(w_1,w_2,w_3)\\
&=y_2w_3^{n+1}\gamma_{n+1}\left(\frac{w_2}{w_3},\frac{w_1}{w_3},\frac{1}{w_3}\right)\\
&=y_2\left(x_1x_2x_3y_1^2y_2\right)^{n+1}Q_{n+1}\left(\frac{1}{x_1y_1},\frac{1}{x_2y_1},\frac{1}{x_3y_2}\right).
\end{align*}
So we get the following result, which provides alternative definitions of $Q_n(x)$ and $Q_n(x,y,z)$.
\begin{theorem}
The neighbor polynomials can be expressed as follows:
$$C_{n}(x_1,x_2,x_3,y_1,y_2)=y_2\left(x_1x_2x_3y_1^2y_2\right)^{n}Q_{n}\left(\frac{1}{x_1y_1},\frac{1}{x_2y_1},\frac{1}{x_3y_2}\right).$$
In particular, we have
\begin{equation}\label{NCA}
\sum_{\sigma\in \operatorname{MP}_n}{x_1}^{\operatorname{lne}(\sigma)}
{x_2}^{\operatorname{lcr}(\sigma)}{x_3}^{\operatorname{nal}(\sigma)}
=(x_1x_2x_3)^{n}\sum_{\tau\in\mqn}\left(\frac{1}{x_1}\right)^{\asc(\tau)}\left(\frac{1}{x_2}\right)^{\plat(\tau)}\left(\frac{1}{x_3}\right)^{\des(\tau)},
\end{equation}
\begin{equation}\label{lnelcrlrp}
\sum_{\sigma\in \operatorname{MP}_n}{x_1}^{\operatorname{lne}(\sigma)}
{x_2}^{\operatorname{lcr}(\sigma)}{y_2}^{\operatorname{lrp}(\sigma)}
=y_2(x_1x_2y_2)^{n}\sum_{\tau\in\mqn}\left(\frac{1}{x_1}\right)^{\asc(\tau)}\left(\frac{1}{x_2}\right)^{\plat(\tau)}\left(\frac{1}{y_2}\right)^{\des(\tau)},
\end{equation}
$$\sum_{\sigma\in \operatorname{MP}_n}{y_1}^{\operatorname{rrp}(\sigma)}{y_2}^{\operatorname{lrp}(\sigma)}
=y_1^{2n}y_2^{n+1}\sum_{\tau\in\mqn}\left(\frac{1}{y_1}\right)^{\asc(\tau)+\plat(\tau)}\left(\frac{1}{y_2}\right)^{\des(\tau)}.$$
\end{theorem}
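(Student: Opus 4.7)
The plan is to chain together three identities already established in the preceding discussion. Theorem~\ref{mainthm2}(i) gives $C_{n+1}(x_1,x_2,x_3,y_1,y_2) = y_2\xi_n(w_1,w_2,w_3)$, where $w_1, w_2, w_3$ are the elementary symmetric polynomials in $b_1 = x_1y_1$, $b_2 = x_2y_1$, $b_3 = x_3y_2$. Identity~\eqref{xigamma} rewrites $\xi_n$ in terms of $\gamma_{n+1}$, and the Chen--Fu expansion~\eqref{Chen22} converts any $\gamma_{n+1}$-sum into a value of $Q_{n+1}$. I would simply compose these three substitutions.

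The key algebraic observation is that the ratios $w_2/w_3,\, w_1/w_3,\, 1/w_3$ are precisely the elementary symmetric functions $e_1, e_2, e_3$ in the reciprocals $1/b_1, 1/b_2, 1/b_3$. Applying~\eqref{xigamma} first yields $\xi_n(w_1,w_2,w_3) = w_3^{n+1}\gamma_{n+1}(w_2/w_3,\, w_1/w_3,\, 1/w_3)$, and then~\eqref{Chen22}---which expresses $Q_{n+1}$ as the polynomial in the elementary symmetric functions of its arguments with coefficients $\gamma_{n+1;i,j,k}$---collapses the right-hand factor to $Q_{n+1}(1/b_1, 1/b_2, 1/b_3)$. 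Substituting $w_3 = x_1x_2x_3y_1^2y_2$ and reindexing $n+1 \to n$ produces the main identity.

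For the three ``in particular'' formulas, I would specialize parameters so as to trivialize unwanted statistics. Setting $y_1 = y_2 = 1$ kills the $\operatorname{rrp}$ and $\operatorname{lrp}$ weights on the left and reduces the prefactor on the right to $(x_1x_2x_3)^n$, which yields~\eqref{NCA}. Setting $x_3 = 1$ and $y_1 = 1$ likewise trivializes the $\operatorname{nal}$ and $\operatorname{rrp}$ statistics, leaving~\eqref{lnelcrlrp}. Finally, setting $x_1 = x_2 = x_3 = 1$ reduces the right-hand $Q_n$ to $Q_n(1/y_1, 1/y_1, 1/y_2)$, and the coincidence of its first two arguments groups the Stirling statistics $\asc$ and $\plat$ together, giving the last formula.

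Since every ingredient is already proved, no substantial obstacle remains; the proof is essentially a clean algebraic chain. The only point demanding care is the bookkeeping of the index shift from $\xi_n$ to $\gamma_{n+1}$ (and then to $Q_{n+1}$), together with the degree count in~\eqref{Chen22}: the constraint $i+2j+3k = 2n+3$ there is exactly what guarantees that the powers of $w_3$ introduced by clearing denominators in $\gamma_{n+1}(w_2/w_3,\, w_1/w_3,\, 1/w_3)$ are compensated cleanly by the overall $w_3^{n+1}$ prefactor.
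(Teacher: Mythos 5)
Your proposal is correct and follows essentially the same route as the paper: it chains the expansion $C_{n+1}=y_2\xi_n(w_1,w_2,w_3)$ from Theorem~\ref{mainthm2}, the identity~\eqref{xigamma}, and the Chen--Fu formula~\eqref{Chen22}, using exactly the observation that $w_2/w_3$, $w_1/w_3$, $1/w_3$ are the elementary symmetric functions of $1/b_1,1/b_2,1/b_3$, and then specializes parameters for the three corollaries. The only (harmless) imprecision is the closing degree-count remark: what actually makes the prefactor $w_3^{n+1}$ clear all denominators is that $i+j+k\leqslant n+1$ for the terms of $\gamma_{n+1}$ (each tree on $[n+1]$ has at most $n+1$ vertices of degree $\leqslant 2$), not the constraint $i+2j+3k=2n+3$ by itself; but since you invoke~\eqref{xigamma} as already established, nothing depends on this.
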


Claesson and Linusson~\cite[Conjecture~9.2]{Claesson11} conjectured that the distribution of left-nestings 
over matchings is given by the second-order Eulerian triangle, i.e.,
\begin{equation}\label{Qnxmatchings01}
Q_n(x)=x^n\sum_{M\in\mm_n}\left(\frac{1}{x}\right)^{\operatorname{lne}(M)},
\end{equation}
which was proved by Levande~\cite[Appendix A]{Levande13} using an inductive proof. In~\cite{Cameron19}, Cameron and Killpatrick discovered that 
\begin{equation}\label{Qnxmatchings02}
Q_n(x)=x^{n+1}\sum_{M\in\mm_n}\left(\frac{1}{x}\right)^{\operatorname{lrp}(M)}.
\end{equation}   
Therefore, the identity~\eqref{lnelcrlrp} gives a common generalization of~\eqref{Qnxmatchings01} and~\eqref{Qnxmatchings02}.

Recall that the NCA-polynomials are defined by $$\operatorname{NCA}_n(x,y,z)=\sum_{\sigma\in \operatorname{MP}_n}{x}^{\operatorname{lne}(\sigma)}
{y}^{\operatorname{lcr}(\sigma)}{z}^{\operatorname{nal}(\sigma)}.$$
In Corollary~\ref{Anxy-Mn}, we obtain
$\operatorname{NCA}_n(x,y,0)=\operatorname{NCA}_n(x,0,y)=\operatorname{NCA}_n(0,x,y)=A_n(x,y)$.
It follows from~\eqref{NCA} that $$\operatorname{NCA}_n(x,y,z)=(xyz)^nQ_n(1/x,1/y,1/z).$$
Substituting it into~\eqref{Dumont80}, one can easily verify that the NCA polynomials satisfy the recursion
\begin{equation}\label{NCA-recu}
\operatorname{NCA}_{n+1}(x,y,z)=n(x+y+z)\operatorname{NCA}_n(x,y,z)-
\left(x^2\frac{\partial}{\partial x}+y^2\frac{\partial}{\partial y}+z^2\frac{\partial}{\partial z}\right)\operatorname{NCA}_n(x,y,z).
\end{equation}
Below are the NCA polynomials for $n\leqslant 4$:
\begin{align*}
\operatorname{NCA}_{1}(x,y,z)&=1,~\operatorname{NCA}_{2}(x,y,z)=x+y+z,\\
\operatorname{NCA}_{3}(x,y,z)&=x^2 + 4 x y + y^2 + 4 x z + 4 y z + z^2,\\
\operatorname{NCA}_{4}(x,y,z)&=x^3 + 11 x^2 y + 11 x y^2 + y^3 + 11 x^2 z + 36 x y z + 11 y^2 z + 
 11 x z^2 + 11 y z^2 + z^3.
\end{align*}
It would be interesting to provide a bijective proof of~\eqref{NCA-recu}.
\section*{Acknowledgements}
Shi-Mei Ma was supported by the National Natural Science Foundation of China (No. 12071063).
Jean Yeh was supported by the National Science and Technology Council (Grant number: MOST 1132115M017005MY2).
\bibliographystyle{amsplain}

\end{document}